\newcommand{\sumprime}{\if@display\sideset{}{'}\sum%
            \else\sum'\fi}
\begin{document}

\numberwithin{equation}{section}

\newtheorem{theorem}{Theorem}[section]
\newtheorem{proposition}[theorem]{Proposition}
\newtheorem{conjecture}[theorem]{Conjecture}
\def\theconjecture{\unskip}
\newtheorem{corollary}[theorem]{Corollary}
\newtheorem{lemma}[theorem]{Lemma}
\newtheorem{observation}[theorem]{Observation}
\newtheorem{definition}{Definition}
\numberwithin{definition}{section} 
\newtheorem{remark}{Remark}
\def\theremark{\unskip}
\newtheorem{question}{Question}
\def\thequestion{\unskip}
\newtheorem{example}{Example}
\def\theexample{\unskip}
\newtheorem{problem}{Problem}

\def\vvv{\ensuremath{\mid\!\mid\!\mid}}
\def\intprod{\mathbin{\lr54}}
\def\reals{{\mathbb R}}
\def\integers{{\mathbb Z}}
\def\N{{\mathbb N}}
\def\complex{{\mathbb C}\/}
\def\dist{\operatorname{dist}\,}
\def\spec{\operatorname{spec}\,}
\def\interior{\operatorname{int}\,}
\def\trace{\operatorname{tr}\,}
\def\cl{\operatorname{cl}\,}
\def\essspec{\operatorname{esspec}\,}
\def\range{\operatorname{\mathcal R}\,}
\def\kernel{\operatorname{\mathcal N}\,}
\def\dom{\operatorname{Dom}\,}
\def\linearspan{\operatorname{span}\,}
\def\lip{\operatorname{Lip}\,}
\def\sgn{\operatorname{sgn}\,}
\def\Z{ {\mathbb Z} }
\def\e{\varepsilon}
\def\p{\partial}
\def\rp{{ ^{-1} }}
\def\Re{\operatorname{Re\,} }
\def\Im{\operatorname{Im\,} }
\def\dbarb{\bar\partial_b}
\def\eps{\varepsilon}
\def\O{\Omega}
\def\Lip{\operatorname{Lip\,}}

\def\Hs{{\mathcal H}}
\def\E{{\mathcal E}}
\def\scriptu{{\mathcal U}}
\def\scriptr{{\mathcal R}}
\def\scripta{{\mathcal A}}
\def\scriptc{{\mathcal C}}
\def\scriptd{{\mathcal D}}
\def\scripti{{\mathcal I}}
\def\scriptk{{\mathcal K}}
\def\scripth{{\mathcal H}}
\def\scriptm{{\mathcal M}}
\def\scriptn{{\mathcal N}}
\def\scripte{{\mathcal E}}
\def\scriptt{{\mathcal T}}
\def\scriptr{{\mathcal R}}
\def\scripts{{\mathcal S}}
\def\scriptb{{\mathcal B}}
\def\scriptf{{\mathcal F}}
\def\scriptg{{\mathcal G}}
\def\scriptl{{\mathcal L}}
\def\scripto{{\mathfrak o}}
\def\scriptv{{\mathcal V}}
\def\frakg{{\mathfrak g}}
\def\frakG{{\mathfrak G}}

\def\ov{\overline}

\thanks{Research supported by the Key Program of NSFC No. 11031008.}

\address{Department of Mathematics, Tongji University, Shanghai, 200092, China}
 \email{boychen@tongji.edu.cn}

 \address{Department of Mathematics, Tongji University, Shanghai, 200092, China}
 \email{99jujiewu@tongji.edu.cn}

 \address{Department of Mathematics, Tongji University, Shanghai, 200092, China}
 \email{1113xuwang@tongji.edu.cn}

\title{Ohsawa-Takegoshi type theorem and extension of plurisubharmonic functions}
\author{Bo-Yong Chen}
\author{Jujie Wu}
\author{Xu Wang}
\date{}
\maketitle

\bigskip

\begin{abstract}

We prove a Thullen type extension theorem of plurisubharmonic functions across a closed complete pluripolar set, which generalizes a theorem of Siu. Our approach depends on an Ohsawa-Takegoshi type extension theorem for a single point in a bounded complete K\"ahler domain, which is of independent interest.

\bigskip

\noindent{{\sc Mathematics Subject Classification} (2000): 32D15, 32D20, 32W05.}

\smallskip

\noindent{{\sc Keywords}: plurisubharmonic function, complete pluripolar set, complete K\"ahler domain.}
\end{abstract}

\section{Introduction}

Removable singularities of analytic objects, e.g., holomorphic or plurisubharmonic (psh) functions, or more generally, holomorphic maps and closed positive currents, are of classical interest and have been deeply studied. A good reference is Carleson \cite{Carleson67}, Siu \cite{SiuTechExten}, or Harvey-Polking \cite{HarveyPolking75}. In this paper, we are going to focus on the following Thullen type extension theorem of Siu:

\begin{theorem}[cf. \cite{SiuLelong}]
Let $\Omega$ be an open subset of ${\mathbb C}^n$, and $V$ a subvariety of $\Omega$.
 Suppose $V$ is of codimension $\ge 1$, and $G$ is an open subset of $\Omega$ which intersects every branch of $V$ of codimension 1. Then every psh function $\varphi$ on $(\Omega\backslash V)\cup G$ can be extended to a psh function on $\Omega$.
\end{theorem}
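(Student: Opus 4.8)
The plan is to reduce the theorem to a local boundedness statement and then invoke classical removable‑singularity results. The assertion is local along $V$, and a psh extension, where it exists locally, is unique: $\Omega\setminus V$ is dense, and two psh functions coinciding on a dense open subset coincide everywhere, so the local extensions patch; on $G$, and on $\Omega\setminus V$, there is nothing to do. Hence it suffices to prove that $\varphi$ is locally bounded above near every point of $V$. Granting this, since $V$ is locally pluripolar (analytic sets are), the classical removable‑singularity theorem for psh functions applied locally shows that $\tilde\varphi(z):=\limsup_{w\to z,\,w\notin V}\varphi(w)$ is psh on $\Omega$ and restricts to $\varphi$ off $V$; and $\tilde\varphi=\varphi$ on $G$ as well, because these two psh functions agree on the dense set $G\setminus V$.

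Next I would separate the codimensions. Near a point of $V$ write $V=V_1\cup V_2$, with $V_1$ the union of the branches of codimension $1$ and $V_2$ that of codimension $\ge 2$. A psh function on the complement of an analytic set of codimension $\ge 2$ is automatically locally bounded above there — this is classical and reflects the continuity principle: any point of such a set can be approached by boundaries of analytic discs lying in the complement (concretely, one chases sub‑mean‑value inequalities along a pair of coordinate discs transverse to the set). Thus only the codimension‑$1$ part is at issue, and even there only along $B_1\setminus B_1^{\mathrm{reg}}$, where $B_1^{\mathrm{reg}}$ denotes the regular points of a branch $B_1$ not lying on any other branch; this exceptional set is itself a codimension‑$\ge 2$ analytic set. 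Writing $W$ for the set of points near which $\varphi$ fails to be locally bounded above, the task becomes to show $W\cap B_1^{\mathrm{reg}}=\varnothing$ for every codimension‑$1$ branch $B_1$. Then $W$ lies in a codimension‑$\ge 2$ analytic set $W'$, $\varphi$ extends across $V\setminus W'$ by the removable‑singularity theorem, and the resulting psh function on $\Omega\setminus W'$ extends across $W'$ since psh functions extend across analytic sets of codimension $\ge 2$; the extension agrees with $\varphi$ on $(\Omega\setminus V)\cup G$ by the density argument above.

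The heart of the matter is this propagation along a branch, and it is here that the hypothesis on $G$ enters — it is exactly what supplies the ``thin slab'' of a Hartogs figure. Fix a codimension‑$1$ branch $B_1$. Its regular locus off the other branches, $B_1^{\mathrm{reg}}$, is connected because $B_1$ is irreducible (one removes only proper analytic subsets from the connected manifold of regular points of $B_1$); and $G$ meets $B_1^{\mathrm{reg}}$ in a nonempty open set near which $\varphi$ is psh. The ``good set'' $\mathcal G\subset B_1^{\mathrm{reg}}$ of points near which $\varphi$ is locally bounded above is open; I claim it is also closed in $B_1^{\mathrm{reg}}$, whence $\mathcal G=B_1^{\mathrm{reg}}$ by connectedness. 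To see closedness, fix $q$ in the closure of $\mathcal G$, choose local coordinates straightening $B_1$ near $q$ to $\{z_1=0\}$ in a polydisc $U=\{|z_1|<1\}\times\{|z'|<1\}$ with $q=(0,z_q')$, and pick a good point $q'=(0,c)$ with $c$ close to $z_q'$, near which $\varphi$ is psh on a full polydisc. Then $\varphi$ is psh on a set containing $\{|z_1|<\delta,\,|z'-c|<\epsilon\}\cup\{a<|z_1|<\delta,\,|z'-c|<R\}$ for suitable $0<a<\delta$ and $\epsilon<R<1-|c|$ — a Hartogs figure — so by the Hartogs extension phenomenon for plurisubharmonic functions $\varphi$ extends to a psh function on the hull $\{|z_1|<\delta,\,|z'-c|<R\}$, which contains $q$ provided $|z_q'-c|<R$; this is arranged by taking $c$ near $z_q'$ and $R$ near $1-|c|$. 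Hence $q\in\mathcal G$. The main obstacle is precisely this bookkeeping — positioning the Hartogs figure so that its hull captures the target point, reducing to the smooth hypersurface model via the structure theory of analytic sets, and checking that the extension it produces really agrees with $\varphi$ off $V$ (it does, the two psh functions agreeing on a dense open set). An alternative to this step, closer to the spirit of the present paper, is to run an $L^2$ / Ohsawa--Takegoshi argument on the complete K\"ahler domain $\Omega\setminus V$ in place of the Hartogs extension.
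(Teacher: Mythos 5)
Your outer reductions are fine and are essentially the standard ones (the paper itself only cites Siu for this theorem and remarks that one reduces to the model case $\Omega=\mathbb{D}^{n+1}$, $V=\{z_{n+1}=0\}$, $G=\mathbb{D}_r^n\times\mathbb{D}$): locality plus uniqueness of extensions, automatic extendability across the codimension~$\ge 2$ part, and propagation of local upper-boundedness along the connected set $B_1^{\mathrm{reg}}$ of a codimension-one branch. The problem is the engine you use for that propagation. The step ``$\varphi$ is psh on a Hartogs figure, hence by the Hartogs extension phenomenon for plurisubharmonic functions it extends to a psh function on the hull'' is not a valid citable principle: plurisubharmonic functions do \emph{not} in general extend across the fat hole of a Hartogs figure. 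The hole $\{|z_1|\le a,\ \epsilon\le|z'-c|<R\}$ has nonempty interior, and psh functions need not extend across full-dimensional holes --- this is exactly the Bedford--Taylor phenomenon invoked in Section~6 of this paper (smooth psh functions on generalized annuli with no subextension), which is also why Proposition~6.1 shows Theorem~1.3 fails on such domains. Moreover, even if some psh extension of $\varphi|_H$ to the hull existed, your parenthetical fix does not work: the Hartogs figure is \emph{not} dense in (hull)$\setminus\{z_1=0\}$ (it misses the interior of the hole entirely), so two psh functions agreeing on the figure need not agree, nor even be comparable, on the hole; hence the extension would give no upper bound for $\varphi$ itself near the target point $q$, which is what closedness of $\mathcal G$ requires.

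What your argument actually needs at this point is precisely the model-case statement: if $u$ is psh on a polydisc minus the hyperplane $\{z_1=0\}$ and is locally bounded above near a thin slab $\{z_1=0,\ |z'-c|<\epsilon\}$, then it is locally bounded above near the whole hyperplane piece. That statement \emph{is} the substantive content of Siu's theorem in the reduced form; it is proved either by H\"ormander's $L^2$ estimates (Siu), by the elementary potential-theoretic argument of Harvey--Polking, or, in the generality of this paper, via the Ohsawa--Takegoshi type Theorem~1.3 as in the proof of Theorem~1.2 (produce $f$ holomorphic off the bad set with $f(z^0)=e^{\varphi(z^0)/2}$ and $\int|f|^2e^{-\varphi}\le C$, extend $f$ holomorphically by Theorem~2.1 plus a Laurent-series argument, then bound $\varphi(z^0)$ by Cauchy estimates on a region where $\varphi$ is known to be bounded). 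Your closing sentence offering the $L^2$/Ohsawa--Takegoshi route ``as an alternative'' concedes the point: without it (or Harvey--Polking's lemma) the Hartogs-figure step is a genuine gap, and with it your write-up is a reduction to the known proof rather than a proof.
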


Since the result is local, one may reduce to the special case when $\Omega$ is the unit polydisc ${\mathbb D}^{n+1}$ in ${\mathbb C}^{n+1}$, $V$ is the hyperplane defined by $z_{n+1}=0$, and $G={\mathbb D}_r^n\times {\mathbb D}$ with some $r<1$. Siu's proof depends on H\"ormander's $L^2$ estimates of the $\bar{\partial}-$operator, which is also the main tool of that famous paper \cite{SiuLelong}. Harvey-Polking \cite{HarveyPolking75} found a different (and more elementary) approach of Siu's theorem, under the weaker condition that $\varphi(z',z_{n+1})$ extends to a subharmonic function on ${\mathbb D}$ for any $z'$ lying in some nonpluripolar subset of ${\mathbb D}^n$.

We shall prove the following

\begin{theorem}\label{th:pshextension}
Let $E$ be a closed complete pluripolar subset of\/ ${\mathbb D}^{n+1}$ such that $E\subset {\mathbb D}^{n}\times {\mathbb D}_r$ for some $0<r<1$. Let $\varphi\in PSH({\mathbb D}^{n+1}\backslash E)$.  Suppose
    there is a set $A\subset {\mathbb D}^{n}$ of positive Lebesgue measure such that for each $z'\in A$, the restriction of $\varphi$ to $(\{z'\}\times {\mathbb D})\backslash E$ is locally bounded from above near $(\{z'\}\times {\mathbb D})\cap E$.  Then $\varphi$
 can be extended to a psh function on ${\mathbb D}^{n+1}$.
 \end{theorem}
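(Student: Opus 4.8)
The plan is to separate the problem into a soft reduction and one genuinely hard analytic step. First I would reduce Theorem \ref{th:pshextension} to a purely local boundedness statement: it is a classical removability principle that a function in $PSH(\mathbb{D}^{n+1}\setminus E)$ which is locally bounded from above near every point of the closed pluripolar set $E$ extends (uniquely, via its upper regularization) to a psh function on $\mathbb{D}^{n+1}$. So the whole content is to show that $\varphi$ is locally bounded from above near each $p=(p',p_{n+1})\in E$; note that $|p_{n+1}|<r$ because $E\subset\mathbb{D}^{n}\times\mathbb{D}_r$.

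Second, I would slice and exploit the hypothesis on $A$. Write $E=\psi^{-1}(-\infty)$ with $\psi\in PSH(\mathbb{D}^{n+1})$, $\psi\not\equiv-\infty$. Since $E\subset\mathbb{D}^{n}\times\mathbb{D}_r$, no entire slice $\{z'\}\times\mathbb{D}$ lies in $E$, hence $\psi(z',\cdot)\not\equiv-\infty$ and $E_{z'}:=E\cap(\{z'\}\times\mathbb{D})$ is a compact polar subset of $\mathbb{D}_r$ for \emph{every} $z'\in\mathbb{D}^{n}$. Fix $r<\rho<1$ and a polydisc $\mathbb{D}^{n}_{s_0}\ni p'$ with $\overline{\mathbb{D}^{n}_{s_0}}\subset\mathbb{D}^{n}$; because $\overline{\mathbb{D}^{n}_{s_0}}\times\{|z_{n+1}|=\rho\}$ is disjoint from $E$ we have $\varphi\le M$ on a neighbourhood of it. For $z'\in A$ the hypothesis gives that $\varphi(z',\cdot)$ is bounded above near $E_{z'}$; combined with subharmonicity on $\mathbb{D}\setminus E_{z'}$ and the removability of polar sets for subharmonic functions bounded above, $\varphi(z',\cdot)$ extends to a subharmonic $\varphi^{*}_{z'}$ on $\mathbb{D}$, and the maximum principle on $\{|z_{n+1}|\le\rho\}$ forces $\varphi^{*}_{z'}\le M$ there. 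Letting $G$ be the set of $z'$ for which $E_{z'}$ is removable for $\varphi(z',\cdot)$, we conclude $A\subset G$ and $\varphi\le M$ on $\bigl((G\cap\mathbb{D}^{n}_{s_0})\times\mathbb{D}_{\rho}\bigr)\setminus E$; in particular this bound holds on a set whose projection to the $z'$-space has positive Lebesgue measure.

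Third — and this is the crux — I would upgrade this to boundedness over a full neighbourhood of $p$ by means of the single-point Ohsawa--Takegoshi extension theorem. Since $E$ is closed and complete pluripolar, $\mathbb{D}^{n+1}\setminus E$ is a bounded domain carrying a complete K\"ahler metric, so for each $q\notin E$ near $p$ with $\varphi(q)>-\infty$ that theorem yields $F_q\in\mathcal{O}(\mathbb{D}^{n+1}\setminus E)$ with $F_q(q)=1$ and $\int_{\mathbb{D}^{n+1}\setminus E}|F_q|^{2}e^{-\varphi}\,d\lambda\le C\,e^{-\varphi(q)}$, where the decisive point is that $C$ depends only on $n$ and on $\mathbb{D}^{n+1}$ and \emph{not} on $\operatorname{dist}(q,E)$. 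Restricting the integral to the region where $\varphi\le M$ turns this into an a priori $L^{2}$ (not merely weighted) estimate for $F_q$ over a fixed set containing a shell $\overline{\mathbb{D}^{n}_{s_0}}\times\{r'\le|z_{n+1}|\le\rho\}$ disjoint from $E$, of size $\le C e^{M}e^{-\varphi(q)}$; then, passing from the $\dbar$-theoretic output $F_q$ to a genuine holomorphic function across the relevant (polar, resp.\ complete pluripolar) part of $E$ by an $L^{2}$-removability argument, an ordinary mean-value/Cauchy estimate relating the value $1$ at $q$ to this $L^{2}$ mass forces $e^{\varphi(q)}\le C'$ with $C'$ uniform for $q$ in a fixed neighbourhood of $p$. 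I expect this propagation to be the main obstacle: one must control $F_q$ \emph{pointwise} at a point $q$ that may be arbitrarily close to $E$, using $L^{2}$ information living only on the ``good'' region where $\varphi$ is a priori bounded, and with constants that do not deteriorate as $q\to E$. Making this rigorous rests precisely on the two ingredients flagged in the abstract: the distance-independent constant in the single-point Ohsawa--Takegoshi estimate (which is exactly why the complete K\"ahler hypothesis is used), and the removability of the complete pluripolar set $E$ and of its polar slices $E_{z'}$ for $L^{2}$ holomorphic functions, which is what licenses the final mean-value step on a genuine polydisc.
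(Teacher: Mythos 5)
Your overall architecture is the paper's: reduce to showing $\varphi$ is locally bounded above near $E$, equip $\mathbb{D}^{n+1}\setminus E$ with a complete K\"ahler metric (Demailly's lemma), apply the single-point Ohsawa--Takegoshi Theorem~1.3 at a point $q$ near $E$ with the normalization $|F_q(q)|^2=e^{\varphi(q)}$ (equivalently $F_q(q)=1$ after rescaling), and finally beat the weighted $L^2$ bound against a Cauchy/mean-value estimate on a shell $B'\times(\mathbb{D}_{r'}\setminus\mathbb{D}_{r''})$ disjoint from $E$ to force $e^{\varphi(q)}\le C'$. But there is a genuine gap exactly at the step you yourself flag as the crux. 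For the shell estimate to control the value of $F_q$ at $q$ (which lies inside $\mathbb{D}^n\times\mathbb{D}_r$, ``behind'' $E$), you must first know that $F_q$ extends holomorphically to the full polydisc $\mathbb{D}^{n+1}$; your proposal to get this ``by an $L^2$-removability argument'' across $E$ is circular. The removability results you would invoke (Theorem~2.1, Corollary~2.4/Siciak) require $F_q\in L^2_{\rm loc}$ near $E$, whereas the only estimate available is the weighted one $\int|F_q|^2e^{-\varphi}\le Ce^{-\varphi(q)}$; this yields unweighted $L^2$ bounds only where $\varphi$ is bounded above, i.e.\ on the shell and on the fibered set $((G\cap\mathbb{D}^n_{s_0})\times\mathbb{D}_\rho)\setminus E$ with $G$ merely of positive measure, not open --- not on any neighborhood of a point of $E$. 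Knowing $\varphi$ is bounded above near $E$ is precisely the conclusion, so it cannot be fed into the removability step.

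The missing bridge, which is how the paper closes the argument, is a Hartogs/Laurent step in the spirit of Harvey--Polking. By Fubini, for almost every $z'$ one has $\int_{\mathbb{D}}|F_q(z',\cdot)|^2e^{-\varphi(z',\cdot)}<\infty$ and $E_{z'}$ is a closed polar subset of $\mathbb{D}$; for $z'\in A$ minus these two null sets (still of positive Lebesgue measure) the hypothesis makes $e^{-\varphi(z',\cdot)}$ bounded below near $E_{z'}$, so $F_q(z',\cdot)$ is locally $L^2$ there and extends holomorphically across $E_{z'}$ by Theorem~2.1. Now expand $F_q$ in a Laurent series in $z_{n+1}$ on $\mathbb{D}^n\times(\mathbb{D}\setminus\overline{\mathbb{D}}_r)$: the negative-index coefficients are holomorphic in $z'$ on $\mathbb{D}^n$ and vanish for $z'$ in a set of positive Lebesgue measure, hence vanish identically; therefore $F_q$ extends holomorphically to all of $\mathbb{D}^{n+1}$, and only then does your Cauchy/mean-value estimate on the shell give $|F_q(q)|^2\le C\int_{\rm shell}|F_q|^2\le Ce^{M}e^{-\varphi(q)}$ and hence the uniform bound on $\varphi(q)$. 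Note this is also exactly where the positive-measure hypothesis on $A$ enters irreplaceably (a nonzero holomorphic function cannot vanish on a set of positive measure); your auxiliary maximum-principle bound $\varphi\le M$ on the good fibers is harmless but not what does the work. With this Laurent-series step supplied, your proof coincides with the paper's.
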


Our analysis also depends on $L^2-$theory for the $\bar{\partial}-$operator. More precisely, we use the following Ohsawa-Takegoshi type extension theorem:

\begin{theorem}
Let $\Omega$ be a bounded complete K\"ahler domain in ${\mathbb C}^n$, and $\varphi\in PSH(\Omega)$. For any $a\in \Omega$ and any complex number $c$ with $|c|^2\le e^{\varphi(a)}$, there exists a holomorphic function $f$ on $\Omega$ such that $f(a)=c$ and
$$
\int_\Omega |f|^2 e^{-\varphi}\le {\rm const.}
$$
where the constant depends only on $n$ and the diameter of $\Omega$.
\end{theorem}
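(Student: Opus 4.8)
\smallskip

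The plan is to run the classical $\bar\partial$-recipe: produce $f$ of the form $f=c\chi-u$, where $\chi$ is a cut-off that is $\equiv 1$ near $a$ and $u$ solves $\bar\partial u=c\,\bar\partial\chi$ against a weight engineered so that (i) $u(a)=0$, hence $f(a)=c$, and (ii) the resulting $L^2$-bound is controlled by $|c|^2e^{-\varphi(a)}\le 1$ times a constant depending only on $n$ and $R:=\operatorname{diam}\Omega$. First some reductions. If $c=0$ take $f\equiv 0$; so assume $c\ne 0$, whence $\varphi(a)>-\infty$. After a translation assume $a=0$, and after replacing $\varphi$ by $\varphi-\varphi(0)$ and dividing $f$ by $c$ we may assume $\varphi(0)=0$ and $c=1$; the hypothesis then reads $1\le e^{0}$, and what must be shown is that there is a holomorphic $f$ on $\Omega$ with $f(0)=1$ and $\int_\Omega|f|^2e^{-\varphi}\le C(n,R)$. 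Since $\varphi$ is psh with $\varphi(0)$ finite, its Lelong number at $0$ vanishes, so by Skoda's integrability lemma $e^{-\varphi}\in L^1$ on a small ball about $0$; this makes the $\bar\partial$-data below finite, the crucial point being rather the size of $\int_{B_r(0)}e^{-\varphi}$ as $r\downarrow 0$.

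The construction. Because $\Omega$ is a bounded \emph{complete} K\"ahler domain, Hörmander--Demailly $L^2$-theory for $\bar\partial$ is available on $\Omega$ even though $\Omega$ need not be pseudoconvex: completeness supplies the density of compactly supported forms (a Gaffney-type argument), so one may solve $\bar\partial u=v$ with the standard weighted estimates against a psh weight, and adding the bounded psh term $|z|^2$ ($\le R^2$ on $\Omega$, with $i\partial\bar\partial|z|^2$ the Euclidean K\"ahler form) supplies the strict positivity needed for the estimate; this is where boundedness enters, through a factor $e^{R^2}$. To handle a point (codimension $n$) rather than a hypersurface I would \emph{not} use the plain estimate with the forcing weight $e^{-\varphi}|z|^{-2n}$: for a cut-off $\chi=\chi(|z|^2/t)$ supported in the shell $\{t\le|z|^2\le 2t\}$ the plain data integral $\int|\bar\partial\chi|^2|z|^{-2n}e^{-\varphi}$ is of size $\sim t^{-1}$ and does not close up as $t\downarrow 0$. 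Instead I would use Ohsawa's twisted Bochner--Kodaira inequality, with an auxiliary function $\eta$ comparable to $-\log|z|^2$ (so $\eta\to+\infty$ at $0$) and a companion $\lambda$, chosen so that the twisted curvature $\eta\,i\partial\bar\partial(\varphi+|z|^2)-i\partial\bar\partial\eta-\lambda^{-1}i\partial\eta\wedge\bar\partial\eta$ still dominates a fixed multiple of the Euclidean form. The factor the twist contributes on the shell (where $\eta\sim\log(1/t)$) is exactly what keeps the data integral bounded as the cut-off scale shrinks; arranging, as in Ohsawa's method, that this integral converges to a quantity comparable to $|c|^2e^{-\varphi(0)}=1$ rather than to $0$, and extracting a weak limit of the solutions, yields $u$ with $u(0)=0$ and a uniform $L^2$-bound. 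Then $f=\chi-u$ (for the limiting data) is holomorphic with $f(0)=1$, and converting the estimate back to the stated weight via $|z|^2\le R^2$ and $|z|^{-2n}\ge R^{-2n}$ on $\Omega$ leaves a constant depending only on $n$ and $R$.

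The main obstacle is precisely item (ii): reconciling the vanishing $u(a)=0$ — which pushes the weight toward the non-integrable singularity $|z|^{-2n}$ — with an $L^2$-bound that is \emph{independent of $\varphi$}. The Bergman-kernel heuristic $K_\varphi(0,0)\ge c(n,R)e^{\varphi(0)}$ shows such a uniform bound must hold, yet a naive cut-off argument only gives a bound that degenerates when $\varphi$ has singularities accumulating at $0$; the twist is what repairs this, and choosing $\eta$, $\lambda$ and the cut-off profile so that the twisted curvature stays positive while the data integral stays absolutely bounded, then checking the limiting function still takes the value $1$ at $0$, is the technical heart of the argument. (An alternative that avoids the single-shot estimate: realize $\{a\}$ as a point of the hyperplane section $\{z_n=a_n\}\cap\Omega$, which is again a bounded complete K\"ahler domain of diameter $\le R$ — the restriction of a complete K\"ahler metric to a closed complex submanifold is complete K\"ahler — extend within it by induction on $n$, and then apply the Ohsawa--Takegoshi extension theorem for a hyperplane section of a bounded complete K\"ahler domain; the constant is multiplied by an $R$-dependent factor at each of the $n$ steps.)
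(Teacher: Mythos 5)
Your overall skeleton (twisted Ohsawa--Takegoshi a priori inequality, $\eta\sim-\log(|z|^2+\varepsilon^2)$, cut-off supported in a shrinking shell, weak limit of solutions) is the same as the paper's, but the two places you yourself flag as delicate are exactly where your argument has no proof, and the paper handles them by ideas that are absent from your proposal. The first and most serious gap is the data term. You take $v=c\,\bar\partial\chi$ and hope that the twist produces a factor $\sim\log(1/t)$ that keeps $([\Theta,\Lambda]^{-1}v,v)$ bounded by a constant times $|c|^2e^{-\varphi(a)}$. It does not: with any of the available curvature terms ($i\partial\bar\partial\log(|z|^2+\varepsilon^2)$ or $\lambda^{-1}i\partial\eta\wedge\bar\partial\eta$) the inverse-curvature gain on the shell $|z|\sim\varepsilon$ is of size $\varepsilon^{2}$ (up to logarithms), so the data integral is of size $|c|^2\varepsilon^{-2n}\int_{B_\varepsilon}e^{-\varphi}$, and there is no bound of $\varepsilon^{-2n}\int_{B_\varepsilon}e^{-\varphi}$ by ${\rm const}_n\,e^{-\varphi(a)}$ for general psh $\varphi$ (the sub-mean-value inequality only gives the opposite inequality); the twist improves the weight in the \emph{solution} estimate, not the size of the data. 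This is precisely why the statement is not a triviality even for pseudoconvex $\Omega$. The paper's missing ingredient is its Lemma 3.3: first apply the classical Ohsawa--Takegoshi theorem on the small ball $B_\varepsilon$ (which is pseudoconvex, so the classical theorem applies) to produce a holomorphic $f$ on $B_\varepsilon$ with $f(0)=c$ and the scale-invariant bound $\varepsilon^{-2n}\int_{B_\varepsilon}|f|^2e^{-\varphi}\le{\rm const}_n$, and then use $v=f\,\bar\partial\chi(|z|^2/\varepsilon^2)\wedge dz_1\wedge\cdots\wedge dz_n$ as the $\bar\partial$-data; the uniform bound of the data integral then comes from this local extension, not from the choice of $\eta,\lambda$. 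Without this (or an equivalent device) your construction does not close, and you say as much when you call it ``the technical heart.''

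The second gap is your appeal to ``H\"ormander--Demailly $L^2$-theory is available on $\Omega$ because completeness gives density of compactly supported forms.'' Gaffney-type density handles the metric, but the weight $\varphi$ is an arbitrary singular psh function on a domain that is \emph{not} pseudoconvex: one cannot in general produce smooth psh functions on all of $\Omega$ decreasing to $\varphi$, nor exhaust $\Omega$ by complete K\"ahler subdomains, so the standard regularization step in the H\"ormander/Demailly scheme breaks down --- this is exactly the difficulty the paper singles out as the point of Theorem 1.3. The paper's substitute (Lemma 3.1 and Proposition 3.2) is to solve the twisted Laplace equation $\Box_\rho^{(j)}w_j=v$ with Dirichlet conditions (Friedrichs extension) on smooth relatively compact subdomains $\Omega_j\uparrow\Omega$ with smoothed weights $\varphi_j\downarrow\varphi$, put $u_j=\rho\,\bar\partial^*_{\varphi_j}w_j$, and use cut-offs built from the distance function of the complete metric to show that the unwanted component $\bar\partial^*_{\varphi_j}(\rho\,\bar\partial w_j)$ tends to $0$, so that a weak limit of $u_j$ solves $\bar\partial u=v$ with the estimate. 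Your proposal (and also your alternative sketch by slicing, which presupposes an OT theorem for hypersurfaces in complete K\"ahler non-pseudoconvex domains with singular weights) simply cites machinery that does not exist off the shelf in this setting; supplying it is the second main content of the paper's proof.
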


   If $\Omega$ is pseudoconvex, the above result is only a rather special case of the original Ohsawa-Takegoshi extension theorem, which has, however, a significant application: Demailly's approximation theory \cite{Demailly92}.  Although the key ingredient in proving Theorem 1.3 is still the a priori inequality for the (twisted) $\bar{\partial}-$operator due to Ohsawa-Takegoshi \cite{OhsawaTakegoshi87}, a crucial difficulty arises when $\Omega$ is merely complete K\"ahler, since one can neither find in general for each $\varphi\in PSH(\Omega)$ a sequence of smooth psh functions defined on $\Omega$ decreasing to $\varphi$, nor a sequence of complete K\"ahler subdomains increasing to $\Omega$, so that the classical approximating procedure of H\"ormander breaks down. To overcome such a difficulty, we propose a new (and even more natural) approach by solving at first the (twisted) Laplace equations (with Dirichlet boundary conditions) on a sequence of smooth subdomains increasing to $\Omega$, then use these solutions to approximate the needed solution of the (twisted) $\bar{\partial}-$equation on $\Omega$ with $L^2$ estimate. Such an idea is motivated by the work of Bando \cite{Bando}.

At first sight, there seem to be no obstructions for extending Theorem 1.3 to arbitrary bounded domains. However, we shall show that the conclusion of Theorem 1.3 fails for each generalized annulus of dimension $\ge 2$ (see Proposition 6.1).

 Combining Theorem 1.2 with Siu's method (cf. \cite{SiuLelong}), we may prove the following

\begin{theorem}
Let $E$ be a compact polar set in the unit disc ${\mathbb D}$ and
$$
\Omega=({\mathbb D}^n_\delta\times {\mathbb D})\cup \left({\mathbb D}^n\times ({\mathbb D}\backslash E)\right)
$$
for $ 0<\delta<1$. Then every closed positive $(1,1)-$current on $\Omega$ can be extended to a closed positive $(1,1)-$current on ${\mathbb D}^{n+1}$.
\end{theorem}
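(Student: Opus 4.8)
The plan is to reduce the extension of the closed positive $(1,1)$-current $T$ to the extension of a plurisubharmonic potential, and then to feed that potential into Theorem 1.2, the tube $G:=\D^n_\delta\times\D$ supplying the set $A$ of ``good slices''.

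First, a set-up. As $E$ is a compact (hence $F_\sigma$) polar subset of $\D$, it is complete polar: there is a subharmonic function $u_0\not\equiv-\infty$ on $\D$ with $E=\{u_0=-\infty\}$. Thus $P:=\D^n\times E$ is a closed complete pluripolar subset of $\D^{n+1}$ (via $(z',z_{n+1})\mapsto u_0(z_{n+1})$), and $P\subset\D^n\times\D_r$ for some $r<1$ since $E\Subset\D$. Observe that $\D^{n+1}\backslash P=\D^n\times(\D\backslash E)$ is a Stein domain --- a product of a polydisc with an open subset of $\C$ --- whose second de Rham cohomology vanishes, since it retracts onto the noncompact Riemann surface $\D\backslash E$; hence every closed positive $(1,1)$-current on it equals $i\p\dbar$ of a psh function. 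Finally $\Omega=(\D^{n+1}\backslash P)\cup G$, so the only obstruction to extending $T$ to $\D^{n+1}$ is the passage across $P$.

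Accordingly, choose $\psi\in PSH(G)$ with $i\p\dbar\psi=T$ on the polydisc $G$ (where $T$ is genuine and singularity-free, as $G\subset\Omega$), and $\varphi\in PSH(\D^{n+1}\backslash P)$ with $i\p\dbar\varphi=T$ on $\D^{n+1}\backslash P$; on $G\backslash P=\D^n_\delta\times(\D\backslash E)$ the difference $h:=\varphi-\psi$ is pluriharmonic. The heart of the matter --- and, I expect, the main obstacle --- is to normalize $\varphi$ (which is free up to addition of a pluriharmonic function on $\D^{n+1}\backslash P$) so that $h$ extends pluriharmonically across $G\cap P=\D^n_\delta\times E$; equivalently, so that $T$ carries a \emph{global} psh potential $\Phi$ on $\Omega$ with $\Phi|_{\D^{n+1}\backslash P}=\varphi$ and $\Phi|_G\in PSH(G)$. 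This is precisely the point at which Siu's method enters: it amounts to solving an additive Cousin-type problem for pluriharmonic functions relative to the covering $\{G,\D^{n+1}\backslash P\}$ of the Hartogs-type domain $\Omega=\D^{n+1}\backslash\big((\D^n\backslash\D^n_\delta)\times E\big)$, and here one uses both the completeness of the pluripolar set $P$ --- so that the logarithmic terms $\log|z_{n+1}-a|$, $a\in E$, which could a priori appear in $h$ are themselves pluriharmonic on $\D^{n+1}\backslash P$ and can be absorbed into the correction --- and the product shape of $G$. (Should a global potential not be directly available in this form, one runs the argument over a finite chain of columns $\D^n_\rho(p')\times\D$ reaching outward from $\D^n_\delta$, extending the potential one column at a time; the delicate point there is that the extension of $T$ across $P$ need not be unique, so one must keep verifying that the partial extension agrees with the honest $T$ on the overlaps --- which holds because two psh functions coinciding off a closed pluripolar set coincide.)

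Granting such a $\Phi$, $\varphi|_{G\backslash P}$ extends to $\Phi|_G\in PSH(G)$; hence for every $z'\in\D^n_\delta$ the slice $\varphi(z',\cdot)$ is the restriction to $\D\backslash E$ of a subharmonic function on $\D$, so it is locally bounded above near the compact set $\{z'\}\times E$. Thus Theorem 1.2 applies with $A:=\D^n_\delta$ (of positive Lebesgue measure) and yields $\widetilde\Phi\in PSH(\D^{n+1})$ extending $\varphi$. Put $\widetilde T:=i\p\dbar\widetilde\Phi$, a closed positive $(1,1)$-current on $\D^{n+1}$. On $\D^{n+1}\backslash P$ it equals $i\p\dbar\varphi=T$; on $G$, the psh functions $\widetilde\Phi$ and $\Phi$ both coincide with $\varphi$ off the closed pluripolar set $G\cap P$, hence coincide on $G$, so $\widetilde T=i\p\dbar\Phi=T$ on $G$. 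Therefore $\widetilde T$ agrees with $T$ on $\Omega=(\D^{n+1}\backslash P)\cup G$, and $\widetilde T$ is the desired extension of $T$ to $\D^{n+1}$.
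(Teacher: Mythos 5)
Your overall frame is the same as the paper's last step (produce a psh potential to which Theorem 1.2 applies with $A=\D^n_\delta$, then use the fact that two psh functions agreeing off a closed pluripolar set agree), and that concluding part of your argument is fine. But the step you yourself flag as ``the heart of the matter'' --- normalizing the potential $\varphi$ on $\D^{n+1}\backslash P$, $P=\D^n\times E$, so that it glues with a psh potential $\psi$ across $\D^n_\delta\times E$, i.e.\ producing a \emph{global} psh potential of $T$ on $\Omega$ --- is exactly the main content of the theorem, and you have not proved it. Your sketch treats it as an additive Cousin problem for pluriharmonic functions for the covering $\{G,\D^{n+1}\backslash P\}$ and suggests the only trouble comes from terms $\log|z_{n+1}-a|$, $a\in E$, which ``can be absorbed.'' That is not justified: $E$ is an arbitrary compact polar set, the pluriharmonic function $h=\varphi-\psi$ on $\D^n_\delta\times(\D\backslash E)$ can have along $\D^n_\delta\times E$ the singular behaviour of a general potential (varying with $z'$), and the existence of a pluriharmonic correction defined on \emph{all} of $\D^{n+1}\backslash P$ is precisely the nontrivial point. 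Moreover $\Omega$ is not pseudoconvex, so the relevant cohomological obstruction (involving $H^1(\Omega,\mathcal O)$) does not vanish for soft reasons, and nothing in your splitting argument uses the \emph{positivity} of $T$ --- which must enter somewhere, since this is where the paper does the real work. The same gap reappears in your fallback ``chain of columns'' variant: at each column you again need the potential to be locally bounded above near the pluripolar slice, which is the unproven claim.

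For comparison, the paper fills exactly this gap as follows: after multiplying $u$ by a small constant so that its Lelong numbers on $G$ are $<2$, Skoda's theorem gives $e^{-\varphi_j}\in L^1_{\rm loc}$ on $G$ for the local potentials; the cocycle $\{e^{f_{jk}-ic_{jk}}\}$ (using $H_2^{\rm sing}(\Omega,\Z)=0$) defines a holomorphic line bundle $L$ on $\Omega$ with semipositive curvature $u$; Proposition 5.1 then shows $L$ extends to $\D^{n+1}$ --- hence is trivial --- by producing, via the H\"ormander--Bombieri--Skoda theorem, H\"ormander's $L^2$ estimates, removability of polar sets for $L^2$ holomorphic functions (Corollary 2.4), and Siu's coherent subsheaf extension, enough $L^2$ holomorphic sections generating the fibres over $G$. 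Triviality of $L$ is what yields the global potential on $\Omega$ that your $\Phi$ is supposed to be; only then does Theorem 1.2 take over. So your proposal is not a complete proof: it reduces the theorem to the statement whose proof is the actual substance of Section 5, and the reduction offers no substitute for that machinery.
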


It seems that the method in proving Theorem 1.3 may be used to develop a fairly general $L^2$ theory of the $\bar{\partial}-$operator on complete K\"ahler manifolds, built on Demailly's approximation theory.  We shall treat it in a future paper.

 \section{Polar sets and extension of analytic objects with $L^2$ conditions}

Recall that a set $E\subset {\mathbb R}^n$ is called\/ {\it polar}\/ if every point in $E$ has a neighborhood $U$ where there is a subharmonic  function $\psi$ not identically equals $-\infty$, such that $\psi=-\infty$ on $E\cap U$. It is known that polarity is invariant under diffeomorphisms, thus makes sense on manifolds.  Replacing subharmonic functions by psh functions, we may define pluripolar sets analogously.

For the proof of Theorem 1.2, we need the following classical result:

 \begin{theorem} [cf. \cite{Conway95}]
 Let $\Omega$ be a domain in ${\mathbb C}$ and $E$ a closed subset of $\Omega$. Then every $f\in L^2_{\rm loc}(\Omega)\cap {\mathcal O}(\Omega\backslash E)$ can be extended to a holomorphic function on $\Omega$ if and only if $E$ is polar.
 \end{theorem}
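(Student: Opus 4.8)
The plan is to prove the two implications separately, the substantive one being that a closed polar set is removable for $L^2_{\rm loc}$ holomorphic functions. \emph{Sufficiency.} Since a polar set has zero planar Lebesgue measure, $\Omega\setminus E$ is dense and $f$ is defined almost everywhere; as the statement is local, I would fix $p\in E$, a disc $D\Subset\Omega$ about $p$, and---by the definition of polarity, after subtracting a constant---a subharmonic function $\psi$ on $D$ with $\psi\le-1$ on $D$ and $\psi\equiv-\infty$ on $E\cap D$. The key point is a gain of Sobolev regularity: $\log(-\psi)\in W^{1,2}_{\rm loc}(D)$, even though the distributional gradient $\nabla\psi$ lies only in $L^p_{\rm loc}$ for $p<2$. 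Indeed $-\log(-\psi)$ is again subharmonic and satisfies $\Delta\big({-}\log(-\psi)\big)\ge|\nabla\log(-\psi)|^{2}$---an identity one checks on the smooth subharmonic regularizations $\psi_\varepsilon\downarrow\psi$ and passes to the limit---so pairing this with a cutoff $\chi^{2}$, $\chi\in C_c^\infty(D)$, and using that the Riesz mass of a subharmonic function is locally finite while $\log(-\psi)\in L^1_{\rm loc}$, one obtains $\int_K|\nabla\log(-\psi)|^{2}\,dA<\infty$ for every $K\Subset D$.

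\emph{Conclusion of sufficiency.} Next I would pick $\theta\in C^\infty(\R)$ with $0\le\theta\le1$, $\theta\equiv1$ on $(-\infty,1/2]$ and $\theta\equiv0$ on $[1,\infty)$, and set $\eta_N:=\theta\big(\log(-\psi)/\log N\big)$, $N\ge2$. Then $0\le\eta_N\le1$, the function $\eta_N$ vanishes on the open neighbourhood $\{\psi<-N\}$ of $E\cap D$, $\eta_N\to1$ a.e.\ as $N\to\infty$, and the regularity gain yields $\|\partial_{\bar z}\eta_N\|_{L^2(K)}\le C_K/\log N$ for $K\Subset D$. For any $\phi\in C_c^\infty(D)$ the function $\eta_N\phi$ is compactly supported in $D$ and vanishes near $E$, so $\int_D f\,\partial_{\bar z}(\eta_N\phi)\,dA=0$ because $f$ is holomorphic on $D\setminus E$ (approximate $\eta_N\phi$ by smooth functions supported there). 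Writing $\partial_{\bar z}(\eta_N\phi)=\eta_N\,\partial_{\bar z}\phi+\phi\,\partial_{\bar z}\eta_N$ and letting $N\to\infty$, the first term tends to $\int_D f\,\partial_{\bar z}\phi\,dA$ by dominated convergence, while the second is at most $\|\phi\|_\infty\,\|f\|_{L^2(\mathrm{supp}\,\phi)}\,\|\partial_{\bar z}\eta_N\|_{L^2(\mathrm{supp}\,\phi)}=O(1/\log N)$; hence $\int_D f\,\partial_{\bar z}\phi\,dA=0$ for all $\phi$. By Weyl's lemma $f$ agrees a.e.\ on $D$ with some $\tilde f\in\mathcal O(D)$, and as $E$ has measure zero and both are continuous off $E$, $\tilde f=f$ on $D\setminus E$; patching these local extensions (they all equal $f$ on the dense set $\Omega\setminus E$) produces the desired $\tilde f\in\mathcal O(\Omega)$.

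\emph{Necessity.} Assume $E$ is not polar; we may take $E\ne\Omega$. If $E$ has an interior point $z_0$, then $f:=1/(z-z_0)$ on $\Omega\setminus E$ (extended, say, by $0$ on $E$) is locally bounded on $\Omega$ and holomorphic off $E$, yet any extension $\tilde f\in\mathcal O(\Omega)$ would satisfy $(z-z_0)\tilde f\equiv1$ on $\Omega$ by the identity theorem, which is impossible at $z_0$. Otherwise $E$ is nowhere dense, and, exhausting $\Omega$ by discs and using the countable subadditivity of polarity, some compact set $K=E\cap\overline D\subset\Omega$ is non-polar. Let $\mu$ be its equilibrium measure: a probability measure with $\mathrm{supp}\,\mu\subseteq K$ whose logarithmic potential $U^\mu$ is bounded above (by the Robin constant, via Frostman) and locally bounded below, so $-U^\mu$ is a locally bounded subharmonic function and the Cauchy transform $f:=\hat\mu=2\,\partial_z U^\mu$ lies in $L^2_{\rm loc}(\mathbb C)$; moreover $f$ is holomorphic on $\mathbb C\setminus\mathrm{supp}\,\mu\supseteq\Omega\setminus E$ and $f(z)=-1/z+O(z^{-2})$ near $\infty$. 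If $f|_\Omega$ extended to $\tilde f\in\mathcal O(\Omega)$, then---$E$ being nowhere dense---$\tilde f$ would agree with $\hat\mu$ on each connected component of $\Omega\setminus\mathrm{supp}\,\mu$, hence on all of it; gluing $\tilde f$ with $\hat\mu$ (legitimate since $\mathrm{supp}\,\mu\subset\Omega$) yields an entire function vanishing at infinity, so $\hat\mu\equiv0$ by Liouville---contradicting the asymptotics above.

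\emph{Main obstacle.} The heart of the matter is the regularity $\log(-\psi)\in W^{1,2}_{\rm loc}$ and the resulting $1/\log N$ decay of $\partial_{\bar z}\eta_N$: this is precisely where the $L^2$ hypothesis is essential---the stray term is closed by Cauchy--Schwarz exactly against $\nabla\log(-\psi)\in L^2$---and it cannot be weakened, since $1/(z-z_0)$ already shows that a single point fails to be removable for $L^p_{\rm loc}$ holomorphic functions when $p<2$. Making the inequality $\Delta(-\log(-\psi))\ge|\nabla\log(-\psi)|^2$ rigorous through regularization, and the routine bookkeeping of Sobolev-class test functions in the identity $\int_D f\,\partial_{\bar z}(\eta_N\phi)\,dA=0$, are the only remaining technical points.
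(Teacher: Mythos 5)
Your proposal is correct, and its substantive half runs on essentially the same engine as the paper's Section 2: the bound $\Delta\bigl(-\log(-\psi)\bigr)\ge|\nabla\log(-\psi)|^{2}$ is exactly the paper's Lemma 2.2 estimate $\int|\nabla\psi|^{2}/\psi^{2}<\infty$, and your logarithmic cutoffs $\eta_N$ killed by Cauchy--Schwarz against $f\in L^{2}_{\rm loc}$, followed by Weyl's lemma, reproduce the mechanism of Theorem 2.3 and Corollary 2.4. The differences are in execution rather than idea: the paper first invokes Evans' theorem to replace the defining subharmonic function by a potential $p_\lambda$ that is harmonic off $E$ and $-\infty$ exactly on $E\cap\overline V$, so that its cutoffs are genuinely smooth and the error term dies because the transition region $\{R<-p_\lambda<eR\}$ shrinks to a null set; you instead keep an arbitrary local $\psi$, justify the gradient bound by smoothing $\psi_\varepsilon\downarrow\psi$, work with Sobolev-class cutoffs (mollifying $\eta_N\phi$ before integrating by parts), and get the decay $\|\partial_{\bar z}\eta_N\|_{L^2}=O(1/\log N)$ from stretching the cutoff over a logarithmic range. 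Both routes are sound; yours avoids Evans' theorem at the cost of a little Sobolev bookkeeping, which you handle. The necessity direction is not proved in the paper at all (it is quoted from Conway), so your equilibrium-measure argument is a genuine addition and it works: the one step stated more tersely than it deserves is ``$U^\mu$ locally bounded $\Rightarrow\ \hat\mu\in L^{2}_{\rm loc}$''; the clean justification is that the equilibrium measure of a non-polar compact set has finite logarithmic energy, and finite energy is equivalent (up to constants, e.g.\ by the identity $\int_{B_R}|\nabla p_\mu|^{2}=\int_{\partial B_R}p_\mu\,\partial_n p_\mu-2\pi\int p_\mu\,d\mu$ with $p_\mu$ locally bounded) to local square-integrability of $\nabla p_\mu$, i.e.\ of the Cauchy transform. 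With that sentence added, the proof is complete.
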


 Based on this fact, Siciak \cite{Siciak82} was able to show that a closed pluripolar subset $E$ of a domain $\Omega\subset {\mathbb C}^n$ is always removable for $f\in L^2_{\rm loc}(\Omega)\cap {\mathcal O}(\Omega\backslash E)$. In this section, we would like to give a completely different approach for the "if" part of Theorem 2.1, as well as far reaching generalizations and applications.  Our start point is the following elementary gradient estimate for subharmonic functions, which seems useful for other purposes.

Consider a domain $\Omega\subset {\mathbb R}^n$ and a subharmonic function $\psi$ on $\Omega$ which is not identically $-\infty$. Then $\Delta \psi$ defines a positive Radon measure on $\Omega$. Set $E=\psi^{-1}(-\infty)$ and
$$
\Omega_R=\{z\in \Omega:\psi<-R\}
$$
 for $R>0$. We claim the following

\begin{lemma}
Suppose furthermore $E$ is\/ {\rm compact},  $\Omega_1\subset \subset \Omega$ and $\psi\in C^2(\Omega\backslash E)$. Then we have
$$
\int_{\Omega_{2}\backslash E} |\nabla\psi|^2/\psi^2 \le \log 2 \int_{\Omega_1}\Delta \psi.
$$
\end{lemma}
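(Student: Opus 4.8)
The plan is to turn $|\nabla\psi|^2/\psi^2$ into a divergence and integrate by parts, the only real work being the care needed at the compact singular set $E$. On $\Omega_1\setminus E$, where $\psi\in C^2$ and $-\psi>1>0$, a direct computation gives
$$
\operatorname{div}\!\left(\frac{\nabla\psi}{-\psi}\right)=\frac{|\nabla\psi|^2}{\psi^2}+\frac{\Delta\psi}{-\psi}\ \ge\ \frac{|\nabla\psi|^2}{\psi^2},
$$
the inequality because $\Delta\psi\ge0$. So it remains to integrate this pointwise bound against a suitable cutoff.

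Fix $\varepsilon>0$ small and $R>2$ large. Let $\eta_\varepsilon\in C^\infty(\mathbb R)$ be nondecreasing with $\eta_\varepsilon\equiv0$ on $(-\infty,2]$ and $\eta_\varepsilon\equiv1$ on $[2+\varepsilon,\infty)$, and let $\zeta_R\in C^\infty(\mathbb R)$ be nonincreasing with $\zeta_R\equiv1$ on $(-\infty,R]$ and $\zeta_R\equiv0$ on $[R+1,\infty)$. Then $\eta_\varepsilon(-\psi)\,\zeta_R(-\psi)\in C^2_c(\Omega\setminus E)$: indeed $-\psi$ is $C^2$ on $\Omega\setminus E$, the support of this function is contained in $\{2\le-\psi\le R+1\}$, a compact subset of $\Omega\setminus E$ (it sits inside $\overline{\Omega_1}\subset\subset\Omega$ and is kept off $E$ because $\psi$ is $C^2$, hence finite and continuous, there), and $\zeta_R(-\psi)$ vanishes on the neighborhood $\{\psi<-(R+1)\}$ of $E$. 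Multiplying the displayed inequality by this cutoff and integrating by parts on $\Omega\setminus E$ (no boundary terms),
$$
\int_{\Omega\setminus E}\eta_\varepsilon(-\psi)\,\zeta_R(-\psi)\,\frac{|\nabla\psi|^2}{\psi^2}\ \le\ -\int_{\Omega\setminus E}\nabla\!\bigl(\eta_\varepsilon(-\psi)\zeta_R(-\psi)\bigr)\cdot\frac{\nabla\psi}{-\psi}.
$$
On the right the term produced by $\nabla\zeta_R(-\psi)$ has the sign of $\zeta_R'\le0$ and may be discarded, while the term produced by $\nabla\eta_\varepsilon(-\psi)$ equals $\int\eta_\varepsilon'(-\psi)\,\zeta_R(-\psi)\,|\nabla\psi|^2/(-\psi)$; since $-\psi>2$ on $\{\eta_\varepsilon'(-\psi)\ne0\}$ and $\zeta_R\le1$, it is $\le\tfrac12\int_\Omega\eta_\varepsilon'(-\psi)\,|\nabla\psi|^2$. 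Now let $R\to\infty$: the right-hand side is already an integral over the fixed compact set $\{2\le-\psi\le2+\varepsilon\}\subset\Omega\setminus E$, whereas the left-hand side increases (monotone convergence, $\zeta_R(-\psi)\uparrow1$) to $\int_{\Omega\setminus E}\eta_\varepsilon(-\psi)\,|\nabla\psi|^2/\psi^2$. Finally, using $\eta_\varepsilon'(-\psi)|\nabla\psi|^2=-\nabla\psi\cdot\nabla(\eta_\varepsilon(-\psi))$ and the weak identity $-\int\nabla\psi\cdot\nabla\phi=\int\phi\,d(\Delta\psi)$ with $\phi=\eta_\varepsilon(-\psi)$ (legitimate because $\nabla\phi$ is supported off $E$, where $\psi$ is $C^2$), we obtain
$$
\int_{\Omega\setminus E}\eta_\varepsilon(-\psi)\,\frac{|\nabla\psi|^2}{\psi^2}\ \le\ \frac12\int_\Omega\eta_\varepsilon(-\psi)\,d(\Delta\psi)\ \le\ \frac12\int_{\Omega_1}\Delta\psi,
$$
the last step because $0\le\eta_\varepsilon\le1$, $\operatorname{supp}\eta_\varepsilon(-\psi)\subset\{\psi\le-2\}\subset\Omega_1$, and $\Delta\psi\ge0$. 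Since $\eta_\varepsilon(-\psi)=1$ on $\Omega_{2+\varepsilon}$ and $\Omega_{2+\varepsilon}\uparrow\Omega_2$ as $\varepsilon\downarrow0$, one more application of monotone convergence gives $\int_{\Omega_2\setminus E}|\nabla\psi|^2/\psi^2\le\tfrac12\int_{\Omega_1}\Delta\psi$, which is in fact a little stronger than the asserted bound (as $\tfrac12<\log2$).

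The step I expect to be the main obstacle is the bookkeeping around $E$: $\psi$ is not $C^2$ — indeed not even in $W^{1,2}_{\rm loc}$, as $\psi=\log|z|$ already shows — near the compact singular set, so one must ensure that every integration by parts is performed against test functions whose gradients are supported in $\Omega\setminus E$. That is exactly why $\eta_\varepsilon(-\psi)$ is taken locally constant near $E$ and why the auxiliary cutoff $\zeta_R(-\psi)$ is inserted before letting $R\to\infty$. (Finiteness of the left-hand side is not assumed; it falls out of the estimate, since all intermediate integrals are over compact subsets of $\Omega\setminus E$.)
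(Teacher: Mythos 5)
Your argument is correct, and it reaches the lemma by a genuinely different route than the paper. The paper first truncates, $\psi_\nu=\lambda_\nu\circ\psi$, tests $\Delta\psi_\nu$ against the logarithmic-scale cutoff $\chi(-\log(-\psi_\nu)+\mu\log 2)$, estimates each dyadic shell $\{-2^{\mu+1}<\psi_\nu<-2^\mu\}$ separately via Green's theorem, sums the geometric series $\sum_{\mu\ge1}2^{-\mu}\log 2=\log 2$, and only then lets $\nu\to\infty$. You instead integrate the pointwise identity $\operatorname{div}\bigl(\nabla\psi/(-\psi)\bigr)=|\nabla\psi|^2/\psi^2+\Delta\psi/(-\psi)$ against a cutoff in the values of $\psi$, discard the well-signed $\zeta_R'$ term, and identify the remaining collar term with the Riesz measure. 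What your route buys: a one-shot argument with no truncation, no dyadic decomposition, no passage $\Delta\psi_\nu\to\Delta\psi$ at the end, and the better constant $\tfrac12<\log 2$, which is in fact sharp (equality for $\psi=\log|z|$ in $\mathbb{R}^2$ with $\Omega$ a small disc, where the mass $2\pi\delta_0$ gives exactly $\pi$ on both sides). What the paper's route buys: since $\psi_\nu$ is constant near $E$ and hence $C^2$ across it, the paper never has to integrate by parts with the non-$W^{1,2}$ function $\psi$ near the singular set, whereas you need the weak identity $-\int\nabla\psi\cdot\nabla\phi=\int\phi\,d(\Delta\psi)$ for $\phi=\eta_\varepsilon(-\psi)$. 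Your parenthetical justification of that identity is the only terse point, but it is sound and worth one more line: $\phi\in C^2_c(\Omega)$ and $\phi$ is constant on the open neighborhood $\{\psi<-(2+\varepsilon)\}$ of $E$, so $\nabla\phi$ and $\Delta\phi$ are supported in a compact set $K\subset\Omega\setminus E$; then $\int\phi\,d(\Delta\psi)=\int\psi\,\Delta\phi$ reduces to an integral over $K$, and Green's formula on a smooth open set $D$ with $K\subset D$, $\overline{D}\subset\Omega\setminus E$ (the boundary term vanishes because $\nabla\phi=0$ on $\partial D$) converts it to $-\int\nabla\psi\cdot\nabla\phi$. With that spelled out, your proof is complete and slightly stronger than the stated lemma.
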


\begin{proof}
For any positive integer $\nu$, we take a convex increasing function $\lambda_\nu$ on ${\mathbb R}$ such that $\lambda_\nu(t)=t$ for $t\ge -\nu$ and $\lambda_\nu(t)=-\nu-1/2$ for $t\le -\nu-1$. Put
 $\psi_\nu=\lambda_\nu\circ \psi$. Clearly, $\psi_\nu\downarrow \psi$ as $\nu\rightarrow \infty$. Let $\chi:{\mathbb R}\rightarrow [0,1]$ be a cut-off function such that $\chi|_{(-\infty,-\log 2)}=1$, $\chi|_{(0,\infty)}=0$, $\chi'\le 0$ and $\chi'|_{(-\log 2,0)}= -1/\log 2$. For positive integers $\nu,\mu$, we set
$$
\lambda_{\nu,\mu}=\chi(-\log(-\psi_\nu)+\mu\log 2).
$$
It follows from Green's theorem that
\begin{eqnarray*}
\int \lambda_{\nu,\mu} \Delta \psi_\nu & = & -\int \nabla\lambda_{\nu,\mu}\cdot \nabla\psi_\nu =\int \chi'(\cdot) \frac{|\nabla\psi_\nu|^2}{\psi_\nu}\\
& \ge & \frac1{\log 2}\int_{\psi_\nu^{-1}((-2^{\mu+1},-2^\mu))}\frac{|\nabla\psi_\nu|^2}{-\psi_\nu}\\
& \ge & \frac{2^\mu}{\log 2}\int_{\psi_\nu^{-1}((-2^{\mu+1},-2^\mu))}\frac{|\nabla\psi_\nu|^2}{\psi^2_\nu}
\end{eqnarray*}
so that
$$
\int_{\psi_\nu^{-1}((-2^{\mu+1},-2^\mu))}\frac{|\nabla\psi_\nu|^2}{\psi_\nu^2}\le \frac{\log 2}{2^\mu} \int_{\Omega_1} \Delta \psi_\nu.
$$
Hence
$$
\int_{\Omega_{2}\backslash E} |\nabla\psi_\nu|^2/\psi_\nu^2 =\sum_{\mu=1}^\infty \int_{\psi_\nu^{-1}((-2^{\mu+1},-2^\mu))}\frac{|\nabla\psi_\nu|^2}{\psi_\nu^2}\le \log 2\int_{\Omega_1} \Delta \psi_\nu.
$$
Letting $\nu\rightarrow \infty$, we immediately get the desired inequality.

\end{proof}

\begin{theorem}
Let $E$ be a closed polar subset of a domain $\Omega\subset {\mathbb R}^n$, and
$$
P\left(x,D^{(1)}\right)=a_0(x)+a_1(x)\partial/\partial x_1+\cdots + a_n(x)\partial/\partial x_n
$$
where $a_0\in L^2_{\rm loc}(\Omega)$ and $a_k\in C^1(\Omega)$ for $1\le k\le n$. Suppose we have $f\in L^2_{\rm loc}(\Omega)$ and $v\in L^1_{\rm loc}(\Omega)$ so that  $P\left(x,D^{(1)}\right) f=v$  on $\Omega\backslash E$ in the sense of distributions. Then $P\left(x,D^{(1)}\right) f=v$ in $\Omega$.
\end{theorem}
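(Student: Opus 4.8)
The strategy is to verify that $\langle Pf,\phi\rangle=\int_\Omega v\,\phi$ for every $\phi\in C_c^\infty(\Omega)$, where by definition $\langle Pf,\phi\rangle=\int_\Omega f\,P^*\phi$ with $P^*\phi=b_0\phi-\sum_{k=1}^n a_k\partial_k\phi$ and $b_0:=a_0-\sum_k\partial_k a_k\in L^2_{\rm loc}(\Omega)$ (so all integrals below converge, since $fb_0\in L^1_{\rm loc}$ and $fa_k\in L^2_{\rm loc}$). This identity holds by hypothesis when $\operatorname{supp}\phi\cap E=\varnothing$, so after a partition of unity we may assume $\operatorname{supp}\phi$ lies in a small ball about a point $p\in E$. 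Everything then reduces to producing, for such $\phi$, Lipschitz functions $\theta_j$ defined near $\operatorname{supp}\phi$ with $0\le\theta_j\le1$, each vanishing on a neighbourhood of $E\cap\operatorname{supp}\phi$, with $\theta_j\to1$ a.e., and --- the crucial quantitative point --- with $\|\nabla\theta_j\|_{L^2(\operatorname{supp}\phi)}\to0$.

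Granting such $\theta_j$: the function $\phi\theta_j$ (extended by $0$) is Lipschitz, compactly supported, with support inside the open set $\Omega\backslash E$; mollifying it there and passing to the limit --- using $f,b_0\in L^2_{\rm loc}$, $v\in L^1_{\rm loc}$, $a_k\in C^1$ --- extends the hypothesis to $\int f\,P^*(\phi\theta_j)=\int v\,\phi\theta_j$. Expanding,
\[
\int f\,P^*(\phi\theta_j)=\int f b_0\phi\,\theta_j-\sum_k\int f a_k(\partial_k\phi)\,\theta_j-\sum_k\int f a_k\phi\,\partial_k\theta_j .
\]
Since $fb_0\phi$ and $fa_k\partial_k\phi$ lie in $L^1(\Omega)$ and $\theta_j\to1$ a.e. with $0\le\theta_j\le1$, dominated convergence sends the first two sums to $\int f b_0\phi-\sum_k\int f a_k\partial_k\phi=\langle Pf,\phi\rangle$; the last sum is at most $C\,\|f\|_{L^2(\operatorname{supp}\phi)}\,\|\nabla\theta_j\|_{L^2(\operatorname{supp}\phi)}\to0$ by Cauchy--Schwarz (as $a_k\phi\in L^\infty$ has compact support); and $\int v\,\phi\theta_j\to\int v\,\phi$. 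Letting $j\to\infty$ gives the claim. This is exactly where the hypotheses $f\in L^2_{\rm loc}$ and $a_0\in L^2_{\rm loc}$ are used: they make the pairings absolutely convergent and let the error term be absorbed by the (only $L^2$) gradient decay of the cut-offs.

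The cut-offs come from Lemma 2.2. Since $E$ is closed and polar, after localizing we may take a subharmonic function $\psi<0$ on a neighbourhood $U$ of $\operatorname{supp}\phi$ whose polar set $\psi^{-1}(-\infty)$ is a compact subset of $U$ containing $E\cap\operatorname{supp}\phi$, with $\psi$ smooth (even harmonic) off $\psi^{-1}(-\infty)$ --- for instance an Evans-type potential of the compact polar set $E\cap\overline B$ ($B\ni p$ a small closed ball), rescaled so that the hypotheses of Lemma 2.2 hold on a suitable sublevel-set domain. That lemma yields $\int_{\operatorname{supp}\phi}|\nabla\psi|^2/\psi^2<\infty$; equivalently $u:=\log(-\psi)$, although $\equiv+\infty$ on $\psi^{-1}(-\infty)$, has finite Dirichlet energy near $\operatorname{supp}\phi$. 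Now fix a smooth $\zeta\colon{\mathbb R}\to[0,1]$ with $\zeta\equiv1$ on $(-\infty,1]$, $\zeta\equiv0$ on $[2,\infty)$, $|\zeta'|\le2$, and set $\theta_j:=\zeta(u/j)$. Then $\theta_j$ is smooth off $\psi^{-1}(-\infty)$ and identically $0$ on the open neighbourhood $\{\psi<-e^{2j}\}$ of $\psi^{-1}(-\infty)$, hence Lipschitz; it is valued in $[0,1]$; it tends to $1$ at every point with $\psi>-\infty$, i.e. a.e., since $\psi^{-1}(-\infty)$ has Lebesgue measure zero; and
\[
\int_{\operatorname{supp}\phi}|\nabla\theta_j|^2\le\frac{4}{j^2}\int_{\operatorname{supp}\phi}|\nabla u|^2\longrightarrow 0 .
\]

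The single genuine difficulty is the finiteness of $\int|\nabla\psi|^2/\psi^2$, the Dirichlet energy of $\log(-\psi)$ --- and this is precisely the content of Lemma 2.2, whose weight $1/\psi^2$ is exactly $|\nabla\log(-\psi)|^2/|\nabla\psi|^2$: the logarithm of a subharmonic function, though infinite along the polar set, is of finite energy, so the slowly growing cut-offs $\zeta(\tfrac1j\log(-\psi))$ have vanishing Dirichlet energy, which is precisely the decay needed to annihilate the error term above. The remaining points --- arranging a compact polar $-\infty$-set inside a ball on which $\psi$ is smooth (so that Lemma 2.2 applies verbatim), or else working with mollifications $\psi_\delta\downarrow\psi$ and a spatial cut-off so that Green's theorem in the proof of Lemma 2.2 is legitimate on a sublevel set that is not relatively compact --- are routine and do not affect the strategy.
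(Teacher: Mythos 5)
Your proposal is correct and follows essentially the same route as the paper: localize near a point of $E$, take an Evans potential that is $-\infty$ exactly on the compact polar piece, invoke Lemma 2.2 for the finiteness of $\int|\nabla\psi|^2/\psi^2$, use logarithmic cut-offs vanishing near $E$ with $L^2$-small gradients, integrate by parts, and kill the error term by Cauchy--Schwarz using $f\in L^2_{\rm loc}$. The only differences are cosmetic: you phrase the pairing via the formal adjoint $P^*$ and use stretched cut-offs $\zeta\bigl(\tfrac1j\log(-\psi)\bigr)$ (with the $1/j$ factor supplying smallness) instead of the paper's sliding-shell cut-offs $\kappa(\log(-p_\lambda)-\log R)$ (where smallness comes from the tail of the convergent energy integral), and your mollification step is not even needed since the cut-offs are smooth off the polar set and vanish near it.
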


\begin{proof}
 The statement is local on $\Omega$, so we may work on a small neighborhood $U$ of a given point $z^0\in E$. Take an open set $V\subset\subset \Omega$ such that $U\subset\subset V$.  Since $E\cap \overline{V}$ is a compact polar set, there is a positive measure $\lambda$ such that the corresponding potential
$$
p_\lambda(x):=\left\{
\begin{array}{ll}
-\int_{{\mathbb R}^n}\frac1{|x-y|^{n-2}}d\lambda(y) & {\rm if\ } n>2\\
\int_{{\mathbb R}^n}\log |x-y|d\lambda(y) & {\rm if\ } n=2
\end{array}
\right.
$$
enjoys the following properties: $p_\lambda(x)=-\infty$ for $x\in E\cap \overline{V}$ and $p_\lambda(x)>-\infty$ outside $E\cap \overline{V}$, in view of Evans' theorem (see e.g., \cite{Landkof}, Theorem 3.1). It is also well-known that $p_\lambda$ is subharmonic in ${\mathbb R}^n$ and harmonic outside $E\cap \overline{V}$. Replacing $p_\lambda$ by $p_\lambda-C$ where $C$ is a sufficiently large constant, we may assume $p_\lambda<0$ in $U$ so that
$$
\int_{U \backslash E} |\nabla p_\lambda|^2/p_\lambda^2<\infty,
$$
in view of Lemma 2.2. It suffices to show
\begin{equation}
\int_U \left (P\left(x,D^{(1)}\right)f\right) g=\int_U v g
\end{equation}
for each $g\in C^\infty_0(U)$.
For any $R>0$, we define
$$
\chi_R(z)=\kappa(\log(-p_\lambda)-\log R)
$$
 where $\kappa$ is a cut-off function on ${\mathbb R}$ such that $\kappa|_{(-\infty,0)}=1$ and $\kappa|_{[1,\infty)}=0$. It is easy to verify that $\chi_R\,g\in C^\infty_0(U\backslash E)$ provided $R$ sufficiently large. Thus
 \begin{eqnarray}
 \int_U v \chi_R g & = & \int_U \left(P\left(x,D^{(1)}\right)f\right) \chi_R g\nonumber\\
  & = & \int_U \chi_R a_0 f g+\sum_{k=1}^n \int_U a_k \frac{\partial f}{\partial x_k} \chi_R g\nonumber \\
 & = &  \int_U \chi_R a_0 f g-\sum_{k=1}^n \int_U f \frac{\partial}{\partial x_k}( a_k \chi_R g).
 \end{eqnarray}
 Since
 $$
 \int_U f \frac{\partial}{\partial x_k}( a_k \chi_R g)= \int_U \chi_R f \frac{\partial}{\partial x_k}( a_k g)+\int_U a_k f g \frac{\partial}{\partial x_k} \chi_R
 $$
 while
 $$
 \left|\int_U a_k f g \frac{\partial}{\partial x_k} \chi_R\right|^2\le \int_{{\rm supp\,}\kappa'(\cdot)} |a_k fg|^2 \int_U |\kappa'(\cdot)|^2 |\nabla p_\lambda|^2/p_\lambda^2\rightarrow 0
 $$
 as $R\rightarrow \infty$, it follows from (2.2) that
 $$
 \int_U a_0 f g-\sum_{k=1}^n \int_U f \frac{\partial}{\partial x_k}( a_k g)=\int_U vg,
 $$
 from which (2.1) immediately follows.
\end{proof}

Viewing the $\bar{\partial}-$equation as a finite number of first order partial differential equations, we may extend a result of Demailly (cf. \cite{Demailly82}, Lemma 6.9) as follows

 \begin{corollary}\label{th:dbarextension}
Let $\Omega$ be a domain in ${\mathbb C}^n$ and $E\subset \Omega$ a closed polar set. Let $u$ be a $(p,q-1)-$form with $L^2_{{\rm loc}}$ coefficients  and $v$ a $(p,q)-$form with $L^1_{{\rm loc}}$ coefficients such that $\bar{\partial}u=v$ on $\Omega\backslash E$ in the sense of distributions. Then $\bar{\partial}u=v$ on $\Omega$.
\end{corollary}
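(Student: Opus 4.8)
The plan is to reduce the assertion to the scalar first-order situation already settled by Theorem 2.3. Since the claim is local, fix $z^0\in E$ and a small ball $U$ with $U\subset\subset\Omega$; it is enough to show $\bar{\partial}u=v$ on $U$. Write $u=\sum_{|I|=p,\,|J|=q-1}u_{IJ}\,dz_I\wedge d\bar z_J$ with each $u_{IJ}\in L^2_{\rm loc}(\Omega)$. Then $\bar{\partial}u=v$ is equivalent to a finite system of scalar equations, one for each pair of multi-indices $(I,K)$ with $|I|=p$, $|K|=q$, of the form
$$
\sum_{k\in K}\varepsilon_{k,K}\,\frac{\partial u_{I,K\setminus\{k\}}}{\partial\bar z_k}=v_{IK},
$$
with signs $\varepsilon_{k,K}\in\{+1,-1\}$. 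Each such equation is a finite $\mathbb{C}$-linear combination of the first-order Wirtinger derivatives $\partial/\partial\bar z_k=\tfrac12(\partial/\partial x_k+i\,\partial/\partial y_k)$ applied to $L^2_{\rm loc}$ functions, i.e.\ an instance of the operators appearing in Theorem 2.3, except that several unknown functions occur simultaneously. Since the proof of Theorem 2.3 is linear in the unknown, it carries over unchanged to such systems, and I would simply rerun that argument.

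In detail: as in the proof of Theorem 2.3, pick (via Evans' theorem) a negative subharmonic potential $p_\lambda$ on $U$ with $p_\lambda=-\infty$ precisely on $E\cap\overline{U}$ and harmonic off it, and form the cut-offs $\chi_R=\kappa(\log(-p_\lambda)-\log R)$, so that $\chi_R\,g\in C^\infty_0(U\setminus E)$ once $R$ is large, for any $g\in C^\infty_0(U)$. Because $\bar{\partial}u=v$ holds on $\Omega\setminus E$, testing the component equation against $\chi_R g$ and integrating by parts in $\bar z_k$ gives
$$
\int_U v_{IK}\,\chi_R\,g=-\sum_{k\in K}\varepsilon_{k,K}\int_U u_{I,K\setminus\{k\}}\Big(\chi_R\,\frac{\partial g}{\partial\bar z_k}+g\,\frac{\partial\chi_R}{\partial\bar z_k}\Big).
$$
Now let $R\to\infty$. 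A polar set has Lebesgue measure zero, so $\chi_R\to1$ a.e.\ off $E$; hence the $v_{IK}$-term and the terms $\int u_{I,K\setminus\{k\}}\,\chi_R\,\partial g/\partial\bar z_k$ converge by dominated convergence (for this, $u_{IJ},v\in L^1_{\rm loc}$ already suffice). For the remaining terms, Cauchy--Schwarz gives, exactly as in Theorem 2.3,
$$
\Big|\int_U u_{I,K\setminus\{k\}}\,g\,\frac{\partial\chi_R}{\partial\bar z_k}\Big|^2\le\int_{{\rm supp}\,\kappa'(\cdot)}|u_{I,K\setminus\{k\}}\,g|^2\cdot\int_U|\kappa'(\cdot)|^2\,\frac{|\nabla p_\lambda|^2}{p_\lambda^2}\longrightarrow 0,
$$
the second factor staying bounded by Lemma 2.2 (which gives $\int_{U\setminus E}|\nabla p_\lambda|^2/p_\lambda^2<\infty$) while the first tends to $0$ because the supports ${\rm supp}\,\kappa'(\cdot)$ shrink to the null set $E$. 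Passing to the limit yields $\int_U v_{IK}\,g=-\sum_{k\in K}\varepsilon_{k,K}\int_U u_{I,K\setminus\{k\}}\,\partial g/\partial\bar z_k$ for every $g\in C^\infty_0(U)$, i.e.\ the component equation holds on $U$ in the sense of distributions; reassembling the components gives $\bar{\partial}u=v$ on $\Omega$.

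The one step deserving attention — and the only place the hypotheses are used in full strength — is the last estimate: it is the $L^2_{\rm loc}$ (rather than merely $L^1_{\rm loc}$) regularity of the coefficients of $u$ that allows Cauchy--Schwarz to pair $|u_{IJ}g|^2$ against the finite quantity $\int|\kappa'(\cdot)|^2|\nabla p_\lambda|^2/p_\lambda^2$ of Lemma 2.2 and make the cut-off error disappear; $v$ enters only through the harmless dominated-convergence passage, which is why $L^1_{\rm loc}$ is enough for it. Everything else — the localization, the construction of $p_\lambda$, the dominated-convergence steps, the bookkeeping of the signs $\varepsilon_{k,K}$, and the trivial upgrade of Theorem 2.3 from a single unknown function to finitely many — is routine.
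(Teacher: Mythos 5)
Your proposal is correct and follows essentially the same route as the paper: the paper obtains the corollary precisely by viewing $\bar{\partial}u=v$ as a finite system of first-order equations and invoking the cut-off argument of Theorem 2.3 (Evans potential, $\chi_R$, Lemma 2.2, Cauchy--Schwarz), which is exactly what you rerun componentwise. Your added remark that the argument extends verbatim from one unknown to finitely many, and that $L^1_{\rm loc}$ suffices for $v$ while $L^2_{\rm loc}$ is needed for $u$, matches the paper's intent.
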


In particular, a closed\/ {\it polar}\/ subset $E$ of a domain $\Omega\subset {\mathbb C}^n$ is always removable for $f\in L^2_{\rm loc}(\Omega)\cap {\mathcal O}(\Omega\backslash E)$, in view of Weyl's lemma.

\begin{remark}
It follows from Diederich-Pflug \cite{DiederichPflug} that if\/ $\Omega\subset {\mathbb C}^n$ is a bounded complete K\"ahler domain and $E:=\overline{\Omega}^\circ\backslash \Omega$ is removable for $f\in L^2_{\rm loc}(\Omega)\cap {\mathcal O}(\Omega\backslash E)$, then $\overline{\Omega}^\circ$ is Stein. Thus if $E$ is polar, then $\overline{\Omega}^\circ$ is Stein.
\end{remark}

Another consequence of Theorem 2.3 is the following Bando type theorem (compare \cite{Bando}, Theorem 1)

\begin{corollary}
Let $E$ be a closed polar subset of the unit ball ${\mathbb B}\subset {\mathbb C}^n$, and $(L,h)$ a holomorphic Hermitian line bundle over ${\mathbb B}\backslash E$. If the curvature $\Theta_h$ of $(L,h)$ is square integrable, then $L$ is flat, i.e., it comes from a representation of $\pi_1({\mathbb B}\backslash E)$.
\end{corollary}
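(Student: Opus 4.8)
The plan is to upgrade the $L^2$ curvature hypothesis into an honest flat Hermitian metric on $L$ over ${\mathbb B}\setminus E$, and then invoke the standard dictionary between flat Hermitian line bundles and unitary representations of the fundamental group. Write $\gamma:=\frac{i}{2\pi}\Theta_h$ for the normalized Chern curvature; it is a \emph{real}, $d$-closed $(1,1)$-form on ${\mathbb B}\setminus E$ which by hypothesis has $L^2$ (hence $L^2_{\rm loc}$) coefficients there, and since the polar set $E$ is Lebesgue null, $\gamma$ defines a form with $L^2_{\rm loc}$ coefficients on all of ${\mathbb B}$.

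First I would show that $\gamma$ is $d$-closed on ${\mathbb B}$ in the sense of currents. On ${\mathbb B}\setminus E$ we have $\bar\partial\gamma=0$, so Corollary~\ref{th:dbarextension}, applied with $u=\gamma$ (a $(1,1)$-form with $L^2_{\rm loc}$ coefficients) and $v=0$, yields $\bar\partial\gamma=0$ on ${\mathbb B}$; conjugating (equivalently, applying the corollary to $\overline\gamma$) gives $\partial\gamma=0$ on ${\mathbb B}$, so $d\gamma=0$ there. This is precisely where the square-integrability of the curvature is used.

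Next I would produce a global real potential. Since ${\mathbb B}$ is contractible and pseudoconvex, the $\partial\bar\partial$-lemma for currents applies: writing $\gamma=d\alpha$ by the Poincar\'e lemma, splitting $\alpha$ into bidegrees, and solving the resulting $\bar\partial$-equations on the ball produces a current $\psi$ with $\gamma=\frac{i}{2\pi}\partial\bar\partial\psi$, and we may take $\psi$ real by replacing it with its real part (its imaginary part is then pluriharmonic and does not affect $\partial\bar\partial\psi$). Taking the trace of $\gamma=\frac{i}{2\pi}\partial\bar\partial\psi$ against the Euclidean metric turns it into $\Delta\psi=(\text{dimensional constant})\cdot{\rm tr}(\gamma)\in L^2_{\rm loc}({\mathbb B})$, so interior elliptic regularity gives $\psi\in W^{2,2}_{\rm loc}({\mathbb B})$; moreover ${\rm tr}(\gamma)$ is smooth on ${\mathbb B}\setminus E$, being the trace of the curvature of a smooth metric, hence $\psi\in C^\infty({\mathbb B}\setminus E)$. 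I would emphasize that nothing is needed about $\psi$ near $E$ --- only that $\psi$ is a \emph{single-valued}, real, finite, smooth function on ${\mathbb B}\setminus E$ --- and single-valuedness is exactly what passing through the contractible ${\mathbb B}$ (rather than working directly on the topologically nontrivial ${\mathbb B}\setminus E$) provides.

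Finally, set $h':=h\,e^{\psi}$ on ${\mathbb B}\setminus E$. Since $\psi$ is real, finite, and smooth there, $h'$ is a genuine smooth Hermitian metric on $L|_{{\mathbb B}\setminus E}$, and by the choice of $\psi$ its normalized Chern curvature is $\gamma-\frac{i}{2\pi}\partial\bar\partial\psi=0$. Thus the Chern connection of $(L,h')$ is a flat unitary connection: around each point of ${\mathbb B}\setminus E$ one writes the local weight of $h'$ as $2\,{\rm Re}\,F$ with $F$ holomorphic (a pluriharmonic function is locally the real part of a holomorphic function), and rescaling the local holomorphic frame by $e^{F}$ makes $h'\equiv 1$, so the transition functions between such frames are locally constant and $U(1)$-valued. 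Hence $L$ is flat and comes from a representation $\pi_1({\mathbb B}\setminus E)\to U(1)\subset{\mathbb C}^*$. I expect the main obstacle to be the middle step: solving $\gamma=\frac{i}{2\pi}\partial\bar\partial\psi$ globally on ${\mathbb B}$ from merely $L^2_{\rm loc}$ data while retaining the regularity of $\psi$ off $E$ --- and it is Corollary~\ref{th:dbarextension} that makes this global solution legitimate, by letting the curvature be transported from ${\mathbb B}\setminus E$ to the full ball ${\mathbb B}$.
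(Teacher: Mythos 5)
Your argument is correct and takes essentially the same route as the paper's own proof: extend the $d$-closed square-integrable curvature across the polar set $E$ (Theorem 2.3 / Corollary 2.4), produce a global real potential on ${\mathbb B}$ that is smooth off $E$, and twist $h$ by its exponential to get a flat Hermitian metric. You simply make explicit two points the paper leaves implicit, namely the regularity of the potential away from $E$ and the passage from the curvature-zero unitary metric to a representation of $\pi_1({\mathbb B}\setminus E)$.
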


In particular, if ${\mathbb B}\backslash E$ is simply connected, then $L$ can be extended to a holomorphic line bundle on ${\mathbb B}$ and hence is globally trivial over ${\mathbb B}\backslash E$.

\begin{proof}
Since $\Theta_h$ is a square integrable $d-$closed $(1,1)-$form on ${\mathbb B}\backslash E$, so it can be extended to a $d-$closed $(1,1)-$current $u$ on ${\mathbb B}$ in view of Theorem 2.3. Thus there exists a real-valued function $\varphi$ on ${\mathbb B}$ such that $i\partial\bar{\partial}\varphi=u$ and $\varphi$ is smooth on ${\mathbb B}\backslash E$. If we put $h'=he^\varphi$, then $\Theta_{h'}=0$. It follows that $L$ is flat.
\end{proof}

Similar as Theorem 2.3, we may prove the following

\begin{proposition}
Let $\Omega$ be a domain in ${\mathbb C}^n$ and $E\subset \Omega$ a closed polar set. Then every $\varphi\in W^{1,2}_{\rm loc}(\Omega)\cap PSH(\Omega\backslash E)$ can be extended to a psh function on $\Omega$.
\end{proposition}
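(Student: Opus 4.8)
The plan is to follow the scheme of the proof of Theorem 2.3, adapting it from a first order operator to the (second order) complex Hessian. Since $\varphi\in W^{1,2}_{\rm loc}(\Omega)\subset L^1_{\rm loc}(\Omega)$, for every $a=(a_1,\dots,a_n)\in\mathbb{C}^n$ the distribution
\[
L_a\varphi:=\sum_{j,k=1}^n a_j\bar a_k\,\frac{\partial^2\varphi}{\partial z_j\partial\bar z_k}
\]
is well defined on all of $\Omega$, and by hypothesis $L_a\varphi\ge 0$ on $\Omega\setminus E$. It suffices to prove $L_a\varphi\ge 0$ on $\Omega$ for every such $a$: then $i\partial\bar\partial\varphi\ge 0$ as a current on $\Omega$, hence $\varphi$ coincides almost everywhere with a psh function $\tilde\varphi$ on $\Omega$ (the standard fact that an $L^1_{\rm loc}$ function with nonnegative complex Hessian has a psh representative); since $E$ is polar, hence Lebesgue--null, $\tilde\varphi=\varphi$ a.e.\ on $\Omega\setminus E$, and two psh functions agreeing a.e.\ coincide everywhere, so $\tilde\varphi|_{\Omega\setminus E}=\varphi$ and $\tilde\varphi$ is the desired extension.

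The assertion $L_a\varphi\ge 0$ is local, so fix $z^0\in E$ and, exactly as in the proof of Theorem 2.3, choose $U\Subset V\Subset\Omega$ with $z^0\in U$, a potential $p_\lambda$ that is subharmonic near $\overline U$, harmonic and $<0$ on $U\setminus E$, equal to $-\infty$ precisely on $E\cap\overline V$, and satisfying $\int_{U\setminus E}|\nabla p_\lambda|^2/p_\lambda^2<\infty$ by Lemma 2.2; and put $\chi_R(z)=\kappa(\log(-p_\lambda(z))-\log R)$ with $\kappa$ as there, so that for $0\le g\in C^\infty_0(U)$ and $R$ large one has $\chi_R g\in C^\infty_0(U\setminus E)$ and $\chi_R g\ge 0$. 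Hence $\int_U\varphi\,L_a(\chi_R g)=\langle L_a\varphi,\chi_R g\rangle\ge 0$. Now integrate by parts \emph{once}, transferring $\partial/\partial z_j$ onto $\varphi$ --- this is precisely where $\varphi\in W^{1,2}_{\rm loc}$ is used ---
\[
\int_U\varphi\,L_a(\chi_R g)=-\int_U\sum_{j,k}a_j\bar a_k\,\frac{\partial\varphi}{\partial z_j}\,\frac{\partial(\chi_R g)}{\partial\bar z_k}
=-\int_U\sum_{j,k}a_j\bar a_k\,\frac{\partial\varphi}{\partial z_j}\Bigl(\chi_R\,\frac{\partial g}{\partial\bar z_k}+g\,\frac{\partial\chi_R}{\partial\bar z_k}\Bigr).
\]
Since $\partial\varphi/\partial z_j\in L^2_{\rm loc}$, $g\in C^\infty_0$, and $0\le\chi_R\le 1$ with $\chi_R\to 1$ pointwise off $E$, dominated convergence gives $-\int_U\sum a_j\bar a_k(\partial\varphi/\partial z_j)\chi_R(\partial g/\partial\bar z_k)\to -\int_U\sum a_j\bar a_k(\partial\varphi/\partial z_j)(\partial g/\partial\bar z_k)=\langle L_a\varphi,g\rangle$. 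For the remaining term, $|\partial\chi_R/\partial\bar z_k|\le|\kappa'(\cdot)|\,|\nabla p_\lambda|/|p_\lambda|$, so by Cauchy--Schwarz it is bounded by
\[
|a|^2\Bigl(\int_{{\rm supp}\,\kappa'(\cdot)}|\nabla\varphi|^2|g|^2\Bigr)^{1/2}\Bigl(\int_U|\kappa'(\cdot)|^2\,|\nabla p_\lambda|^2/p_\lambda^2\Bigr)^{1/2},
\]
which tends to $0$ as $R\to\infty$, because ${\rm supp}\,\kappa'(\cdot)\subset\{p_\lambda\le -R\}\cap U$ shrinks to the Lebesgue--null set $E\cap\overline V$ while $|\nabla\varphi|^2\in L^1_{\rm loc}$ and $\int_{U\setminus E}|\nabla p_\lambda|^2/p_\lambda^2<\infty$. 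Hence $\langle L_a\varphi,g\rangle\ge 0$ for all $0\le g\in C^\infty_0(U)$, i.e.\ $L_a\varphi\ge 0$ near $z^0$; combined with positivity on $\Omega\setminus E$, this gives $L_a\varphi\ge 0$ on $\Omega$.

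The heart of the argument is the same as in Theorem 2.3: the term carrying a derivative of the cutoff is controlled by the gradient estimate of Lemma 2.2. The one genuinely new point --- and the reason the hypothesis reads $W^{1,2}_{\rm loc}$ rather than merely $L^1_{\rm loc}$ --- is that the complex Hessian is second order, so one must move one derivative onto $\varphi$ before the cutoff scheme can run; with only $\varphi\in L^1_{\rm loc}$ one would be forced to pair $\varphi$ against $\nabla^2\chi_R$, which Lemma 2.2 does not control. Apart from that, the only external ingredient is the standard regularization fact quoted in the reduction step.
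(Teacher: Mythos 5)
Your proof is correct and is essentially the paper's own argument: the same cutoff $\chi_R$ built from the Evans potential, the same appeal to Lemma 2.2 to control the term carrying $\nabla\chi_R$, a single integration by parts justified precisely by the $W^{1,2}_{\rm loc}$ hypothesis, and the same regularization step to produce the psh representative at the end. The only (cosmetic) difference is the order of operations: you integrate by parts at the outset, pairing $\nabla\varphi$ against $\nabla(\chi_R g)$ and thus never seeing second derivatives of the cutoff, whereas the paper first writes identity (2.3), which contains $\partial^2\chi_R/\partial z_j\partial\bar z_k$, and then removes that term by the same integration by parts.
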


\begin{proof} Let $U$ and $\chi_R$ be as in the proof of Theorem 2.3. For each unit vector $\zeta\in {\mathbb C}^n$ and non-negative test function $g\in C^\infty_0(U)$, we have
\begin{eqnarray}
  \int \chi_R \varphi \sum_{j,k} \frac{\partial^2 g }{\partial z_j\partial\bar{z}_k}\zeta_j\bar{\zeta}_k & = &
   \int \varphi \sum_{j,k} \frac{\partial^2 (\chi_R\,g) }{\partial z_j\partial\bar{z}_k}\zeta_j\bar{\zeta}_k-2{\rm Re}\int \varphi \sum_{j,k} \frac{\partial \chi_R}{\partial z_j}\frac{\partial g}{\bar{z}_k}\zeta_j\bar{\zeta}_k \nonumber\\
   && - \int \varphi g \sum_{j,k} \frac{\partial^2 \chi_R }{\partial z_j\partial\bar{z}_k}\zeta_j\bar{\zeta}_k.
  \end{eqnarray}
  An integration by parts yields
  $$
   - \int \varphi g \sum_{j,k} \frac{\partial^2 \chi_R }{\partial z_j\partial\bar{z}_k}\zeta_j\bar{\zeta}_k=\int \varphi \sum_{j,k} \frac{\partial g}{\partial z_j}\frac{\partial  \chi_R}{\bar{z}_k}\zeta_j\bar{\zeta}_k+\int g \sum_{j,k} \frac{\partial \varphi}{\partial z_j}\frac{\partial  \chi_R}{\bar{z}_k}\zeta_j\bar{\zeta}_k.
  $$
Since the first integral in RHS of (2.3) is nonnegative, and the remaining integrals are dominated by
$$
\|\varphi\|_{W^{1,2}(U)}\|\nabla\chi_R\|_{L^2(U)},
$$
so we have
$$
\int  \varphi \sum_{j,k} \frac{\partial^2 g }{\partial z_j\partial\bar{z}_k}\zeta_j\bar{\zeta}_k \ge 0
$$
by letting $R\rightarrow \infty$ in (2.3). The desired psh extension of $\varphi$ to $U$ is
$$
\hat{\varphi}=\lim_{\varepsilon\rightarrow 0} (\varphi\ast \kappa_\varepsilon)
$$
where $\kappa_\varepsilon$ is a standard smoothing kernel.

\end{proof}

\section{Proof of Theorem 1.3}
Consider first a bounded domain $\Omega\subset {\mathbb C}^n$ with $C^2-$boundary. Let $g$ be a K\"ahler metric and $\varphi$ a $C^2$ real function both defined in a neighborhood of $\overline{\Omega}$. Let $D_{(p,q)}(\Omega)$ denote the set of $C^\infty-$smooth  $(p,q)-$forms which are compactly supported in $\Omega$. For $u,v\in D_{(p,q)}(\Omega)$, we define the inner product as follows
$$
(u,v)_\varphi=\int_\Omega u\wedge \bar{\ast} v\,e^{-\varphi}
$$
where $\ast$ is Hodge's star operator with respect to $g$. Let $\bar{\partial}^\ast_\varphi$ denote the formal adjoint of $\bar{\partial}$ corresponding to $(\cdot,\cdot)_\varphi$.

Suppose $\eta,\lambda$ are two $C^\infty$ positive functions on $\overline{\Omega}$. Then we have the following fundamental a priori inequality due to Ohsawa-Takegoshi \cite{OhsawaTakegoshi87} (see also \cite{Ohsawa95}, \cite{DemaillyBook10}):
\begin{equation}
\|(\sqrt{\eta}+\sqrt{\lambda})\bar{\partial}^\ast_\varphi u\|^2_\varphi+\|\sqrt{\eta}\bar{\partial}u\|^2_\varphi\ge ([\eta i\partial\bar{\partial}\varphi-i\partial\bar{\partial}\eta-\lambda^{-1}i\partial\eta\wedge \bar{\partial}\eta,\Lambda]u,u)_\varphi
\end{equation}
for $u\in D_{(n,1)}(\Omega)$. Suppose furthermore there exists a continuous positive $(1,1)-$form $\Theta$ on $\overline{\Omega}$ such that
$$
\eta i\partial\bar{\partial}\varphi-i\partial\bar{\partial}\eta-\lambda^{-1}i\partial\eta\wedge \bar{\partial}\eta\ge \Theta.
$$
Put $\rho=(\sqrt{\eta}+\sqrt{\lambda})^2$ and define the twisted complex Laplacian by
$$
\Box_\rho:=\bar{\partial}(\rho\cdot \bar{\partial}^\ast_\varphi)+\bar{\partial}^\ast_\varphi(\rho\cdot\bar{\partial}).
$$
Clearly, we have
$$
(\Box_\rho u,v)_\varphi=(u,\Box_\rho v)_\varphi
$$
for $u,v\in D_{(n,1)}(\Omega)$ and
\begin{equation}
(\Box_\rho u,u)_\varphi=\|\sqrt{\rho} \bar{\partial}^\ast_\varphi u\|^2_\varphi+\|\sqrt{\rho}\bar{\partial}u\|^2_\varphi\ge ([\Theta,\Lambda]u,u)_\varphi\ge c\|u\|_\varphi^2
\end{equation}
for suitable constant $c>0$.  Let $H\subset L^2_{(n,1)}(\Omega,g,\varphi)$ be the completion of $D_{(n,1)}(\Omega)$ under the norm
$$
\|\sqrt{\rho} \bar{\partial}^\ast_\varphi u\|_\varphi+\|\sqrt{\rho}\bar{\partial}u\|_\varphi.
$$
    In view of the Friedrichs Extension Theorem (see e.g., Folland-Kohn \cite{FollandKohn}, p.\,14\,), $\Box_\rho$ admits a unique self-adjoint extension, which is still denoted by $\Box_\rho$ for the sake of simplicity, with ${\rm Dom\,}\Box_\rho\subset H$ satisfying
 $$
 (\Box_\rho u,u)_\varphi=\|\sqrt{\rho} \bar{\partial}^\ast_\varphi u\|^2_\varphi+\|\sqrt{\rho}\bar{\partial}u\|^2_\varphi
 $$
 for all $u\in  {\rm Dom\,}\Box_\rho$, such that the following result holds:

\begin{lemma}
For any $v\in L^2_{(n,1)}(\Omega,g,\varphi)$, there is a unique $w\in H$ such that $\Box_\rho w=v$ and
$$
\max\left\{([\Theta,\Lambda]w,w)_\varphi, \|\sqrt{\rho} \bar{\partial}^\ast_\varphi w\|^2_\varphi,\|\sqrt{\rho}\bar{\partial}w\|^2_\varphi\right\} \le ([\Theta,\Lambda]^{-1}v,v)_\varphi.
$$
\end{lemma}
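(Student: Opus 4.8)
The plan is to produce $w$ by the Riesz representation theorem on the Hilbert space $H$, and then to extract the three bounds from a Cauchy--Schwarz inequality taken with respect to the positive form $[\Theta,\Lambda]$. The starting algebraic remark is that, because $\Theta$ is a \emph{continuous} positive $(1,1)$-form on the compact set $\overline{\Omega}$, the fibrewise operator $[\Theta,\Lambda]$ acting on $(n,1)$-forms is positive definite and uniformly bounded from above and below on $\overline{\Omega}$; it therefore possesses a bounded, fibrewise positive-definite square root $[\Theta,\Lambda]^{1/2}$ and a bounded inverse $[\Theta,\Lambda]^{-1}$, with $([\Theta,\Lambda]^{-1}v,v)_\varphi\le {\rm const.}\,\|v\|_\varphi^2<\infty$ for every $v\in L^2_{(n,1)}(\Omega,g,\varphi)$. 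Writing $(v,w)_\varphi=([\Theta,\Lambda]^{-1/2}v,[\Theta,\Lambda]^{1/2}w)_\varphi$ and using ordinary Cauchy--Schwarz gives, for any $(n,1)$-forms $v$ and $w$,
$$
|(v,w)_\varphi|\le ([\Theta,\Lambda]^{-1}v,v)_\varphi^{1/2}\,([\Theta,\Lambda]w,w)_\varphi^{1/2}.
$$

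Next, existence and uniqueness. By (3.2) the quadratic form $Q(u):=\|\sqrt{\rho}\,\bar{\partial}^\ast_\varphi u\|^2_\varphi+\|\sqrt{\rho}\,\bar{\partial}u\|^2_\varphi$ satisfies $Q(u)\ge c\,\|u\|_\varphi^2$, so the norm of $H$ is equivalent to $Q^{1/2}$ and dominates the $L^2$-norm; hence the functional $u\mapsto(v,u)_\varphi$ is bounded on $H$ for each fixed $v\in L^2_{(n,1)}(\Omega,g,\varphi)$. Applying the Riesz representation theorem to the Hilbert space $(H,Q)$ --- equivalently, invoking the invertibility of the self-adjoint Friedrichs extension $\Box_\rho$, which is bounded below by $c>0$ --- yields a unique $w\in{\rm Dom\,}\Box_\rho\subset H$ with $\Box_\rho w=v$, characterized by $Q(w,u)=(v,u)_\varphi$ for every $u\in H$, where $Q(\cdot,\cdot)$ is the symmetric bilinear form polarizing $Q$. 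Uniqueness is immediate from coercivity.

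Finally, the estimate. The a priori inequality (3.2), both of whose sides are continuous in the norm of $H$ (the left-hand side is equivalent to the square of that norm, the right-hand side is $\le {\rm const.}\,\|u\|_\varphi^2\le {\rm const.}\,Q(u)$), extends by density from $D_{(n,1)}(\Omega)$ to all of $H$; in particular $([\Theta,\Lambda]w,w)_\varphi\le Q(w)$. Taking $u=w$ in $Q(w,u)=(v,u)_\varphi$ gives $Q(w)=(v,w)_\varphi$, and combining this with the Cauchy--Schwarz inequality above,
$$
(v,w)_\varphi\le ([\Theta,\Lambda]^{-1}v,v)_\varphi^{1/2}\,([\Theta,\Lambda]w,w)_\varphi^{1/2}\le ([\Theta,\Lambda]^{-1}v,v)_\varphi^{1/2}\,(v,w)_\varphi^{1/2},
$$
so that $(v,w)_\varphi\le ([\Theta,\Lambda]^{-1}v,v)_\varphi$. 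Since $\|\sqrt{\rho}\,\bar{\partial}^\ast_\varphi w\|^2_\varphi$, $\|\sqrt{\rho}\,\bar{\partial}w\|^2_\varphi$ and $([\Theta,\Lambda]w,w)_\varphi$ are each $\le Q(w)=(v,w)_\varphi$, the asserted inequality follows. The one step demanding genuine care is this passage from the test forms $D_{(n,1)}(\Omega)$ to $H$ in (3.2), together with the identification of the Riesz representative with the Friedrichs-extension solution of $\Box_\rho w=v$; everything else is formal manipulation.
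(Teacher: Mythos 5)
Your argument is correct and follows essentially the same route as the paper: existence and uniqueness via the coercive form $Q$ and the Friedrichs extension (the paper simply cites Folland--Kohn, p.~14, for what you rederive with Riesz representation), and the estimates via the Cauchy--Schwarz inequality with respect to $[\Theta,\Lambda]$ combined with $([\Theta,\Lambda]w,w)_\varphi\le (\Box_\rho w,w)_\varphi=(v,w)_\varphi$. Your extra care about the uniform bounds on $[\Theta,\Lambda]$ and the density passage from $D_{(n,1)}(\Omega)$ to $H$ in (3.2) is exactly the detail the paper leaves implicit, and it is valid in this setting since $\Theta$ is continuous and positive on the compact set $\overline{\Omega}$.
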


\begin{proof}
The first conclusion follows directly from \cite{FollandKohn}, p.\,14. For the $L^2-$estimates,  we infer from (3.2) that
$$
([\Theta,\Lambda]w,w)_\varphi^2\le (v,w)_\varphi^2\le ([\Theta,\Lambda]^{-1}v,v)_\varphi\cdot ([\Theta,\Lambda]w,w)_\varphi,
$$
so that $([\Theta,\Lambda]w,w)_\varphi\le ([\Theta,\Lambda]^{-1}v,v)_\varphi$. Similarly,
$$
\max\left\{\|\sqrt{\rho} \bar{\partial}^\ast_\varphi w\|^2_\varphi,\|\sqrt{\rho}\bar{\partial}w\|^2_\varphi\right\}\le (v,w)_\varphi\le ([\Theta,\Lambda]^{-1}v,v)_\varphi.
$$
\end{proof}

Now suppose $\Omega\subset {\mathbb C}^n$ is a bounded domain with a complete K\"ahler metric $g$. Let $\varphi\in PSH(\Omega)$ and $\eta,\lambda$ two positive $C^\infty$ functions on $\overline{\Omega}$.

\begin{proposition}
Suppose there is a continuous positive $(1,1)-$form $\Theta$ on $\overline{\Omega}$ such that
$$
\eta i\partial\bar{\partial}\varphi-i\partial\bar{\partial}\eta-\lambda^{-1}i\partial\eta\wedge \bar{\partial}\eta\ge \Theta
$$
holds in the sense of distributions. Then for any smooth $\bar{\partial}-$closed $(n,1)-$form $v$ such that
$$
([\Theta,\Lambda]^{-1}v,v)_\varphi<\infty,
$$
there is a solution $u\in L^2_{(n,0)}(\Omega,{\rm loc})$ of the equation $\bar{\partial}u=v$ such that
$$
\left|\int_\Omega \frac{u\wedge \bar{u}}{(\sqrt{\eta}+\sqrt{\lambda})^2}e^{-\varphi}\right|\le ([\Theta,\Lambda]^{-1}v,v)_\varphi.
$$
\end{proposition}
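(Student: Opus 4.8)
The plan is to solve the twisted Dirichlet problem for $\Box_\rho$ on a smooth exhaustion of $\Omega$, where Lemma 3.1 applies directly, and then recover a solution of $\bar\partial u=v$ on all of $\Omega$ by a weak limit; the point is that this avoids approximating $\varphi$ by smooth psh functions on all of $\Omega$, or exhausting $\Omega$ by complete K\"ahler subdomains, neither of which is available. First I would fix relatively compact subdomains $\Omega_1\Subset\Omega_2\Subset\cdots$ with $C^\infty$ boundary and $\bigcup_j\Omega_j=\Omega$ (regular sublevel sets of a smooth exhaustion function, by Sard's theorem). On each $\overline{\Omega_j}\Subset\Omega$ the metric $g$ and the functions $\eta,\lambda$ are already smooth, so only $\varphi$ has to be regularized. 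Rewriting the hypothesis as $i\partial\bar\partial\varphi\ge\beta$, where
$$
\beta:=\eta^{-1}\bigl(\Theta+i\partial\bar\partial\eta+\lambda^{-1}i\partial\eta\wedge\bar\partial\eta\bigr)
$$
is a \emph{continuous} $(1,1)$-form on $\overline\Omega$, I would set $\varphi_j:=\varphi\ast\kappa_{\varepsilon_j}$ for a sequence $\varepsilon_j\downarrow0$. Since $i\partial\bar\partial\varphi_j=(i\partial\bar\partial\varphi)\ast\kappa_{\varepsilon_j}\ge\beta\ast\kappa_{\varepsilon_j}$, and $\beta\ast\kappa_{\varepsilon_j}\to\beta$ uniformly on $\overline{\Omega_j}$ while $\Theta\ge c_j\,\omega$ there for some $c_j>0$ (with $\omega$ the Euclidean form), one may choose $\varepsilon_j$ small enough that $\varphi_j$ is defined and smooth near $\overline{\Omega_j}$ and
$$
\eta\,i\partial\bar\partial\varphi_j-i\partial\bar\partial\eta-\lambda^{-1}i\partial\eta\wedge\bar\partial\eta\ \ge\ (1-\delta_j)\,\Theta\qquad\text{on }\ \overline{\Omega_j},
$$
with $\delta_j\downarrow0$; note also that $\varphi_j\downarrow\varphi$ and $\varphi_j\ge\varphi$, and that $\Theta_j:=(1-\delta_j)\Theta$ is continuous and positive on $\overline{\Omega_j}$.

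Next I would apply Lemma 3.1 on $\Omega_j$ to $v$ with the data $(g,\varphi_j,\eta,\lambda,\Theta_j)$: since $v$ is smooth on $\overline{\Omega_j}$ and $\Omega_j$ is bounded, $v\in L^2_{(n,1)}(\Omega_j,g,\varphi_j)$, so there is $w_j$ with $\Box_\rho w_j=v$ and $\|\sqrt\rho\,\bar\partial^*_{\varphi_j}w_j\|^2_{\varphi_j}\le([\Theta_j,\Lambda]^{-1}v,v)_{\varphi_j}$ (by interior elliptic regularity $w_j$ is smooth in $\Omega_j$). Since $\bar\partial v=0$, the usual argument in the Ohsawa--Takegoshi scheme (apply $\bar\partial$ to $\Box_\rho w_j=v$ and pair with $\rho\,\bar\partial w_j$) gives $\bar\partial^*_{\varphi_j}(\rho\,\bar\partial w_j)=0$, whence $u_j:=\rho\,\bar\partial^*_{\varphi_j}w_j$ solves $\bar\partial u_j=v$ on $\Omega_j$ and
$$
\Bigl|\int_{\Omega_j}\frac{u_j\wedge\bar u_j}{\rho}\,e^{-\varphi_j}\Bigr|\ \le\ ([\Theta_j,\Lambda]^{-1}v,v)_{\varphi_j}\ \le\ (1-\delta_j)^{-1}\,([\Theta,\Lambda]^{-1}v,v)_\varphi,
$$
the last inequality using $\Theta_j=(1-\delta_j)\Theta$, $\varphi_j\ge\varphi$ and the positivity of $[\Theta,\Lambda]$ on $(n,1)$-forms. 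Since $([\Theta,\Lambda]^{-1}v,v)_\varphi<\infty$ and $\delta_j\to0$, this bound is uniform in $j$.

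Finally I would pass to the limit. On any fixed $K\Subset\Omega$ the functions $\varphi_j$ are uniformly bounded above (they decrease to $\varphi$), and $\rho$ is bounded above on $\overline\Omega$, so the uniform bound forces $\{u_j\}$ to be bounded in $L^2_{(n,0)}(K)$; a diagonal argument over the exhaustion then produces a subsequence $u_j\rightharpoonup u$ weakly in $L^2_{(n,0)}(\Omega,{\rm loc})$, and pairing with compactly supported test forms gives $\bar\partial u=v$ on $\Omega$. For the estimate, fix $K$ and $k$; for $j\ge k$ we have $\varphi_k\ge\varphi_j$, hence $e^{-\varphi_k}\le e^{-\varphi_j}$ on $K$, and therefore
$$
\Bigl|\int_K\frac{u_j\wedge\bar u_j}{\rho}\,e^{-\varphi_k}\Bigr|\ \le\ \Bigl|\int_{\Omega_j}\frac{u_j\wedge\bar u_j}{\rho}\,e^{-\varphi_j}\Bigr|\ \le\ (1-\delta_j)^{-1}\,([\Theta,\Lambda]^{-1}v,v)_\varphi .
$$
As the smooth positive weight $e^{-\varphi_k}$ turns $u\mapsto\bigl|\int_K\rho^{-1}u\wedge\bar u\,e^{-\varphi_k}\bigr|^{1/2}$ into a norm equivalent to $\|\cdot\|_{L^2(K)}$, weak lower semicontinuity together with $\delta_j\to0$ yields $\bigl|\int_K\rho^{-1}u\wedge\bar u\,e^{-\varphi_k}\bigr|\le([\Theta,\Lambda]^{-1}v,v)_\varphi$; letting $k\to\infty$ (monotone convergence, $e^{-\varphi_k}\uparrow e^{-\varphi}$) and then $K\uparrow\Omega$ gives the claimed inequality.

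The main obstacle I anticipate is the bookkeeping in this passage to the limit: one must keep the $L^2$ bound uniform in $j$ while both the weight $e^{-\varphi_j}$ and the curvature lower bound $\Theta_j$ are moving, which is exactly what the monotonicity $\varphi_j\downarrow\varphi$ (together with $\varphi_j\ge\varphi$) and the slack $\delta_j\to0$ are designed to control; a secondary point requiring care is checking that mollification preserves the twisted curvature inequality up to this slack on each $\overline{\Omega_j}$.
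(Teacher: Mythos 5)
Your overall architecture (exhaust $\Omega$ by smooth $\Omega_j$, solve the twisted Laplace equation via the Friedrichs extension with a regularized weight, pass to a weak limit) is the paper's, and your handling of the weight regularization with the slack $(1-\delta_j)\Theta$ is fine, indeed slightly more careful than the paper's phrasing. But there is a genuine gap at the decisive step: from $\Box_\rho w_j=v$ you only get the decomposition $v=\bar\partial\bigl(\rho\,\bar\partial^*_{\varphi_j}w_j\bigr)+\bar\partial^*_{\varphi_j}\bigl(\rho\,\bar\partial w_j\bigr)$ in the sense of distributions on $\Omega_j$, and your claim that ``pairing with $\rho\,\bar\partial w_j$'' kills the second term is not justified. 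That pairing requires an integration by parts on the bounded domain $\Omega_j$, and the resulting boundary terms are uncontrolled: the Friedrichs extension imposes (roughly Dirichlet-type) conditions on $w_j$ itself, not the $\bar\partial$-Neumann-type condition on $\rho\,\bar\partial w_j$ that would be needed, and interior elliptic regularity gives no information at $\partial\Omega_j$. In fact $\Omega_j$ is just a smooth bounded domain, in general not pseudoconvex, so there is no reason the equation $\bar\partial u_j=v$ should even be solvable on $\Omega_j$; the curvature hypothesis only yields the a priori inequality (3.1) for compactly supported forms.

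A sanity check confirms the step cannot be repaired as stated: your argument never uses the completeness of the K\"ahler metric $g$, so if it were correct it would prove Proposition 3.2 (hence Theorem 1.3) for arbitrary bounded domains equipped with, say, the Euclidean metric, contradicting Proposition 6.1 (failure on generalized annuli for $n\ge 2$). The paper's proof avoids exactly this: it does \emph{not} claim $\bar\partial u_j=v$ on $\Omega_j$, but only that $\bar\partial^*_{\varphi_j}(\rho\,\bar\partial w_j)\to 0$ in the sense of distributions on $\Omega$. This is done with cut-offs $\kappa_\varepsilon=\chi(\varepsilon\,{\rm dist}_g(0,\cdot))$, whose supports are compact in $\Omega$ (hence contained in $\Omega_j$ for $j$ large) precisely because $g$ is complete, and whose gradients have $g$-norm $O(\varepsilon)$; pairing $\bar\partial\bar\partial^*_{\varphi_j}(\rho\,\bar\partial w_j)=0$ with $\kappa_\varepsilon^2\rho\,\bar\partial w_j$ is then legitimate (no boundary terms) and gives $\|\kappa_\varepsilon\,\bar\partial^*_{\varphi_j}(\rho\,\bar\partial w_j)\|_{\varphi_j}\le {\rm const.}\,\varepsilon\,([\Theta,\Lambda]^{-1}v,v)_\varphi^{1/2}$ uniformly in $j$, after which one lets $j\to\infty$ and $\varepsilon\to 0$ against a fixed test form. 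You would need to replace your vanishing claim by this (or an equivalent) completeness-based argument; the rest of your limiting procedure can then go through essentially as you wrote it.
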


\begin{proof}
Take a sequence of domains $\Omega_j\subset \Omega$ with $C^\infty-$boundaries such that $\overline{\Omega}_j\subset \Omega_{j+1}$, $\Omega=\cup \Omega_j$, and a sequence of strictly psh functions $\varphi_j$ on $\Omega_{j+1}$ such that $\varphi_j\downarrow \varphi$ on $\Omega$ as $j\rightarrow \infty$. Applying Lemma 3.1 to $(\Omega_j,g,\varphi_j,\eta,\lambda,\rho)$, we get a solution $w_j$ of the equation $\Box_\rho^{(j)}w=v$ such that
$$
\max\left\{([\Theta,\Lambda]w_j,w_j)_{\varphi_j}, \|\sqrt{\rho} \bar{\partial}^\ast_{\varphi_j} w_j\|^2_{\varphi_j},\|\sqrt{\rho}\bar{\partial}w_j\|^2_{\varphi_j}\right\} \le ([\Theta,\Lambda]^{-1}v,v)_{\varphi_j}\le ([\Theta,\Lambda]^{-1}v,v)_{\varphi}.
$$
Furthermore, $w_j$ is smooth on $\Omega_j$ in view of the theory of elliptic operators (see \cite{GilbergTrudinger}). Put $u_j=\rho  \bar{\partial}^\ast_{\varphi_j} w_j$. Since $\|\rho^{-1/2}u_j\|_{\varphi_j}^2\le ([\Theta,\Lambda]^{-1}v,v)_{\varphi}$, there is a subsequence, which is still denoted by $u_j$, such that $\rho^{-1/2} u_j\rightarrow \rho^{-1/2}u$ weakly on $\Omega$ together with estimate
$$
\left|\int_\Omega \rho^{-1} u\wedge \bar{u}\,e^{-\varphi}\right|\le ([\Theta,\Lambda]^{-1}v,v)_{\varphi}.
$$
Thus it suffices to show that $\bar{\partial}u=v$ in the sense of distributions. Since
\begin{equation}
v=\Box_\rho^{(j)} w_j=\bar{\partial} u_j+\bar{\partial}^\ast_{\varphi_j} (\rho \bar{\partial} w_j),
\end{equation}
we only need to show that $\bar{\partial}^\ast_{\varphi_j} (\rho \bar{\partial} w_j)\rightarrow 0$ in the sense of distributions.

Without loss of generality, we may assume $0\in \Omega$. Let ${\rm dist}_g(0,z)$ denote the distance between $0,z$, w.r.t. $g$. Replacing the distance function by a smoothing of it, we may also assume, without loss of generality, that it is smooth. Let $\chi:{\mathbb R}\rightarrow [0,1]$ be a cut-off function satisfying $\chi|_{(-\infty,1/2)}=1$ and $\chi|_{(1,\infty)}=0$. For any $\varepsilon>0$, we define $\kappa_\varepsilon(z)=\chi(\varepsilon\cdot {\rm dist}_g(0,z))$. Clearly, ${\rm supp\,}\kappa_\varepsilon\subset \Omega_j$ provided $j$ sufficiently large, for $g$ is complete. Since $\bar{\partial} \bar{\partial}^\ast_{\varphi_j} (\rho \bar{\partial} w_j)=0$ holds in the sense of distributions on $\Omega_j$ in view of (3.3), so we have
\begin{eqnarray*}
0 & = & (\bar{\partial} \bar{\partial}^\ast_{\varphi_j} (\rho \bar{\partial} w_j),\kappa_\varepsilon^2\rho \bar{\partial} w_j)_{\varphi_j}
        =  \|\kappa_\varepsilon \bar{\partial}^\ast_{\varphi_j} (\rho \bar{\partial} w_j)\|^2_{\varphi_j}-2 ( \bar{\partial}^\ast_{\varphi_j} (\rho \bar{\partial} w_j),\kappa_\varepsilon \rho\bar{\partial} \kappa_\varepsilon \lrcorner\,  \bar{\partial} w_j)_{\varphi_j}
\end{eqnarray*}
where $"\lrcorner"$ is the contraction operator. It follows from the Schwarz inequality that
\begin{eqnarray*}
\|\kappa_\varepsilon \bar{\partial}^\ast_{\varphi_j} (\rho \bar{\partial} w_j)\|_{\varphi_j} & \le & 2M^{1/2}\varepsilon\sup|\chi'| \|\sqrt{\rho} \bar{\partial}w_j\|_{\varphi_j}\ \ \ \ (M:=\sup \{\rho(z):z\in \Omega\})\\
& \le & {\rm const.}\varepsilon\, ([\Theta,\Lambda]^{-1}v,v)_\varphi^{1/2}.
\end{eqnarray*}
Given $f\in D_{(n,1)}(\Omega)$, we may take $\varepsilon\ll 1$ and $j\ge j_0 \gg 1$ such that ${\rm supp\,}f\subset \Omega_j$ and $\kappa_\varepsilon=1$ on ${\rm supp\,}f$. It follows that
$$
|(\bar{\partial}^\ast_{\varphi_j} (\rho \bar{\partial} w_j),f)_{\varphi_{j_0}}|=|(\kappa_\varepsilon\bar{\partial}^\ast_{\varphi_j} (\rho \bar{\partial} w_j),f)_{\varphi_{j_0}}|\le \|\kappa_\varepsilon \bar{\partial}^\ast_{\varphi_j} (\rho \bar{\partial} w_j)\|_{\varphi_j} \|f\|_{\varphi_{j_0}}\rightarrow 0
$$
as $j\rightarrow \infty$ and $\varepsilon\rightarrow 0$, so that $\bar{\partial}^\ast_{\varphi_j} (\rho \bar{\partial} w_j)\rightarrow 0$ in the sense of distributions.
\end{proof}

\begin{remark}
In fact, the artificial hypothesis that $v$ is smooth can be removed. To see this, simply take a sequence of smooth $(n,1)-$forms $v_j$ converge weakly to $v$. Although $v_j$ is not $\bar{\partial}-$closed, $\bar{\partial} v_j$ converges weakly to $0$. Thus the previous argument still works.
\end{remark}

To proceed the proof, we also need the following simple observation:

\begin{lemma}
Let $B_\varepsilon\subset {\mathbb C}^n$ denote the ball with center $0$ and radius $\varepsilon$. Let $\varphi\in PSH(B_\varepsilon)$. For any complex number $c$ with $|c|^2\le e^{\varphi(0)}$, there exists a holomorphic function $f$ on $B_\varepsilon$ such that $f(0)=c$ and
$$
\varepsilon^{-2n}\int_{B_\varepsilon} |f|^2 e^{-\varphi}\le {\rm const}_n.
$$
\end{lemma}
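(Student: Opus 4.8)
The plan is to recognize the assertion as the Ohsawa--Takegoshi $L^2$-extension theorem for the zero-dimensional submanifold $\{0\}\subset B_\varepsilon$, and to run its proof. First I would normalize. A dilation $f(z)\rightsquigarrow f(\varepsilon z)$, $\varphi(z)\rightsquigarrow\varphi(\varepsilon z)$ reduces matters to $\varepsilon=1$: one must show that on the unit ball $B$, for every $\varphi\in PSH(B)$ and every $c$ with $|c|^2\le e^{\varphi(0)}$ there is $f\in\mathcal O(B)$ with $f(0)=c$ and $\int_B|f|^2e^{-\varphi}\le{\rm const}_n$. If $\varphi(0)=-\infty$ then $c=0$ and $f\equiv0$ works; otherwise, $\varphi(0)$ being a constant, replacing $\varphi$ by $\varphi-\varphi(0)$ and $f$ by $e^{-\varphi(0)/2}f$ (still holomorphic) I may assume $\varphi(0)=0$ and $|c|\le1$.

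Next I would produce $f$ as the $\bar\partial$-correction of a cut-off, following Ohsawa--Takegoshi. Fix $\chi\in C^\infty_0(B)$ with $\chi\equiv1$ near $0$, so that $c\chi$ has the value $c$ at $0$; then $f=c\chi-u$ will do provided $\bar\partial u=v:=c\,\bar\partial\chi$ with $u(0)=0$ and $\int_B|u|^2e^{-\varphi}\le{\rm const}_n|c|^2$. To solve this I use the twisted a priori inequality (3.1), via Proposition 3.2, on $B$ equipped with a complete K\"ahler metric, with Ohsawa's choice of auxiliary functions: for $\sigma>0$ put $t_\sigma=-\log(|z|^2+\sigma^2)$, take $\eta=\gamma(t_\sigma)$ for a suitable increasing profile $\gamma$ with $\gamma'+\gamma''<0$, and $\lambda=\gamma'(t_\sigma)^2/(-\gamma'(t_\sigma)-\gamma''(t_\sigma))$; using the elementary identity
$$i\partial\bar\partial\log(|z|^2+\sigma^2)+i\,\partial\log(|z|^2+\sigma^2)\wedge\bar\partial\log(|z|^2+\sigma^2)=(|z|^2+\sigma^2)^{-1}\,i\partial\bar\partial|z|^2,$$
a direct computation shows the curvature term in (3.1) dominates the continuous positive form $\Theta_\sigma:=\gamma'(t_\sigma)(|z|^2+\sigma^2)^{-1}i\partial\bar\partial|z|^2$. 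Proposition 3.2 then gives $u_\sigma$ with $\bar\partial u_\sigma=v$ and $\int_B(\sqrt\eta+\sqrt\lambda)^{-2}|u_\sigma|^2e^{-\varphi}\le([\Theta_\sigma,\Lambda]^{-1}v,v)_\varphi$. The profile $\gamma$ is chosen so that this right-hand side remains $\le{\rm const}_n|c|^2$ as $\sigma\downarrow0$; passing to weak limits yields $u$ with $\bar\partial u=v$ and $\int_B|u|^2e^{-\varphi}\le{\rm const}_n|c|^2$, while the companion bound involving $(|z|^2+\sigma^2)^{-n}$ survives the limit and forces $u(0)=0$ (recall $u$ is holomorphic near $0$, where $v=0$). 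Then $f=c\chi-u$ is holomorphic, $f(0)=c$, and satisfies the required estimate; since $B$ has diameter $2$, the constant depends on $n$ only.

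The step I expect to be the genuine obstacle is the quantitative one hidden in the choice of the profile $\gamma$: the estimate $([\Theta_\sigma,\Lambda]^{-1}v,v)_\varphi\le{\rm const}_n|c|^2$ with a constant independent of $\varphi$. The naive alternative --- simply putting $2n\log|z|$ into the weight and applying ordinary H\"ormander estimates --- fails, because $e^{-\varphi}$ need not be locally integrable on ${\rm supp}\,\bar\partial\chi$ (a sufficiently heavy logarithmic pole of $\varphi$ can be placed there); it is exactly the twist built from $t_\sigma$, together with the identity above, that turns that potentially divergent integral into a convergent one, and pinning down a profile for which every term of (3.1) closes with a purely dimensional constant is the technical heart of the Ohsawa--Takegoshi construction. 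The remaining ingredients --- that $B$ carries a complete K\"ahler metric so the machinery of this section is available, and the bookkeeping of the weak limits and of the vanishing at $0$ as $\sigma\downarrow0$ --- are routine.
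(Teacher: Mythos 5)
Your opening move (dilation to the unit ball, normalizing $\varphi(0)=0$, $|c|\le 1$) is exactly how the paper's proof begins, but after that the routes diverge and yours has a genuine gap. The paper's proof is two lines: since the ball is pseudoconvex, the lemma is literally a special case of the classical Ohsawa--Takegoshi theorem in its single-point form (cf.\ \cite{OhsawaTakegoshi87}, \cite{Demailly92}), which is simply cited; the constant is dimensional because the ball has diameter $2$. You instead undertake to reprove Ohsawa--Takegoshi with the twisted machinery of Section 3, and the decisive step is not only deferred (you yourself call it ``the technical heart'') but, in the setup you describe, false. You fix a cutoff $\chi$ with $\chi\equiv 1$ near $0$ and take $v=c\,\bar\partial\chi$. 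On the fixed annulus ${\rm supp\,}\bar\partial\chi$, which stays away from the origin, the functions $\eta=\gamma(t_\sigma)$, $\lambda$ and the form $\Theta_\sigma$ are uniformly comparable to constants, so
$$
([\Theta_\sigma,\Lambda]^{-1}v,v)_\varphi\ \asymp\ |c|^2\int_{{\rm supp\,}\bar\partial\chi}|\bar\partial\chi|^2\,e^{-\varphi},
$$
and no choice of the profile $\gamma$ can bound this by ${\rm const}_n|c|^2$: already for the pluriharmonic weight $\varphi=N\,{\rm Re\,}z_1$ (with $\varphi(0)=0$) the right-hand side is of size $e^{cN}$, and for a $\varphi$ with a pole of order $\ge 2n$ placed on the annulus it is $+\infty$. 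The twist cannot repair this, because it only acts where the auxiliary weight degenerates, i.e.\ near the chosen point; what repairs it in the genuine Ohsawa--Takegoshi argument --- and in the paper's own use of the machinery in the proof of Theorem 1.3 --- is a cutoff $\chi(|z|^2/\varepsilon^2)$ shrinking to the point together with the singular term $2n\log|z|$ in the weight, so that the data term becomes comparable to $\varepsilon^{-2n}\int_{B_\varepsilon}|f|^2e^{-\varphi}$; bounding exactly that quantity is what the present lemma is for, and on the ball it is obtained from the known theorem (where pseudoconvexity allows H\"ormander's approximation of $\varphi$ by smooth psh functions, which is unavailable in the complete K\"ahler setting of Theorem 1.3). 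As written, your plan runs in a circle at its hardest point.

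A second, related gap: the assertion that ``the companion bound involving $(|z|^2+\sigma^2)^{-n}$ survives the limit and forces $u(0)=0$'' has no support in your setup. The estimate you quote from Proposition 3.2 carries only the bounded factor $(\sqrt\eta+\sqrt\lambda)^{-2}$ and no weight singular at the origin, so nothing forces the correction $u$ to vanish at $0$; to produce such a bound you must insert $2n\log|z|$ (or $n\log(|z|^2+\sigma^2)$) into the weight, and then you face precisely the data-term problem described above. The correct and intended proof of the lemma is the paper's: rescale and invoke the classical extension theorem on the pseudoconvex ball, reserving the twisted complete-K\"ahler apparatus for Theorem 1.3.
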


\begin{proof}
Put $\varphi_\varepsilon(z)=\varphi(\varepsilon z)$, $z\in {\mathbb B}:$ the unit ball. In view of the Ohsawa-Takegoshi extension theorem, there is a holomorphic function $h$ on ${\mathbb B}$ such that $h(0)=c$ and
$$
\int_{\mathbb B} |h|^2 e^{-\varphi_\varepsilon}\le {\rm const}_n |c|^2e^{-\varphi_\varepsilon(0)}\le {\rm const}_n
$$
(compare \cite{Demailly92}). Put $f(z)=h(z/\varepsilon)$ for $z\in B_\varepsilon$. Then $f\in {\mathcal O}(B_\varepsilon)$, $f(0)=c$ and
$$
\varepsilon^{-2n} \int_{B_\varepsilon} |f|^2 e^{-\varphi}=\int_{\mathbb B} |h|^2 e^{-\varphi_\varepsilon}\le {\rm const}_n.
$$
\end{proof}

\begin{proof}[Proof of Theorem 1.3]
Without loss of generality, we may assume $a=0$ and $|z|<e^{-1}$ on $\Omega$. For small $\varepsilon>0$, we put
$$
\phi(z)=\log (|z|^2+\varepsilon^2),\ \ \ \eta=-\phi+\log(-\phi).
$$
Since
$$
-\partial\bar{\partial}\eta = \partial\bar{\partial}\phi-\phi^{-1}\partial\bar{\partial}\phi+\phi^{-2}\partial\phi\wedge \bar{\partial}\phi,\ \ \ -\partial\eta=(1-\phi^{-1})\partial \phi,
$$
we conclude that
\begin{equation}
-i \partial\bar{\partial}\eta-\lambda^{-1}i\partial\eta\wedge \bar{\partial}\eta\ge i\partial\bar{\partial}\phi
\end{equation}
where $\lambda=(1-\phi)^2$. Let $f$ be the holomorphic function in Lemma 3.3. Put $\Theta=i\partial\bar{\partial}(\phi+|z|^2)$ and $v=f\bar{\partial}\chi(|z|^2/\varepsilon^2)\wedge dz_1\wedge\cdots\wedge dz_n$. If $\varepsilon$ is sufficiently small, then $v\in D_{(n,1)}(\Omega)$ satisfies
\begin{eqnarray*}
([\Theta,\Lambda]^{-1}v,v)_{\varphi+|z|^2+2n\log|z|} & \le & \sup|\chi'|^2 \int_{B_\epsilon\backslash B_{\varepsilon/\sqrt{2}}} |f|^2\frac{|z|^2}{\varepsilon^4}\frac{(|z|^2+\varepsilon^2)^2}{\varepsilon^2}|z|^{-2n}e^{-\varphi-|z|^2}\\
& \le & {\rm const}_n \varepsilon^{-2n} \int_{B_\varepsilon}|f|^2 e^{-\varphi}\le {\rm const}_n.
\end{eqnarray*}
By Proposition 3.2, there is a solution $u_\varepsilon$ of the equation $\bar{\partial}u=v$ such that
$$
\int_\Omega \frac{|u^\ast_\varepsilon|^2}{(\sqrt{\eta}+\sqrt{\lambda})^2} e^{-\varphi-|z|^2-2n\log|z|}\le {\rm const}_n
$$
where $u=u^\ast dz_1\wedge\cdots\wedge dz_n$. Put $f_\varepsilon=f\chi(|z|^2/\varepsilon^2)-u_\varepsilon$. Then we have $f_\varepsilon \in {\mathcal O}(\Omega)$, $f_\varepsilon(0)=f(0)=c$, and
$$
\int_\Omega \frac{|f_\varepsilon|^2}{|z|^{2n}(\log|z|)^2} e^{-\varphi}\le {\rm const}_n,
$$
for $\sqrt{\eta}+\sqrt{\lambda}\asymp |\log(|z|^2+\varepsilon^2)|$.
To complete the proof, it suffices to take a limit of $\{f_\varepsilon\}$ as $\varepsilon\rightarrow 0$.
\end{proof}

\section{Proof of Theorem 1.2}

 It suffices to show that $\varphi$ is locally bounded above near $E$. Replacing ${\mathbb D}^{n+1}\backslash E$ by ${\mathbb D}^{n+1}_r\backslash E$ for $0<r<1$, we may assume that $E$ is a closed complete pluripolar subset of a neighborhood of $\overline{D}^{n+1}$. Thanks to Demailly \cite{DemaillyBook}, Chapter 3, Lemma 2.2, there is a function
 $$
 \rho\in PSH^{-}({\mathbb D}^{n+1})\cap C^\infty({\mathbb D}^{n+1}\backslash E)
 $$
  such that $\rho=-\infty$ on $E\cap {\mathbb D}^{n+1}$. Here $PSH^-$ stands for the set of negative psh functions. Thus
   $$
   -{\partial}\bar{\partial}\log(-\rho)-\sum_{j=1}^{n+1}{\partial}\bar{\partial}\log(1-|z_j|)
   $$
   defines a complete K\"ahler metric on ${\mathbb D}^{n+1}\backslash E$.

 Let $B\subset\subset {\mathbb D}^n$ be a ball and $z^0\in (B\times {\mathbb D}_{r})\backslash E$. Applying Theorem 1.3 in a similar way as Demailly \cite{Demailly92}, we get  a holomorphic function $f$ on ${\mathbb D}^{n+1}\backslash E$ with the following properties: $f(z^0)=e^{\varphi(z^0)/2}$ and
 $$
 \int_{{\mathbb D}^{n+1}\backslash E} |f|^2 e^{-\varphi}\le C_n.
 $$
By Fubini's theorem, there exists a set $Z_1\subset {\mathbb D}^n$ of zero Lebesgue measure such that
  $$
 \int_{{\mathbb D}}|f(z',\cdot)|^2 e^{-\varphi(z',\cdot)}<\infty.
 $$
 for any $z'\in {\mathbb D}^n\backslash Z_1$. It is easy to see that there is a set $Z_2\subset {\mathbb D}^n$ of zero Lebesgue measure such that $(\{z'\}\times {\mathbb D})\cap E$ is a closed polar set in ${\mathbb D}$ for any $z'\in {\mathbb D}^n\backslash Z_2$. Thus $A-Z_1-Z_2$ is of positive Lebesgue measure and for any $z'\in A-Z_1-Z_2$, $f(z',\cdot)$ is locally $L^2$ near the closed polar set $(\{z'\}\times {\mathbb D})\cap E$, thus it extends holomorphically across this polar set, in view of Theorem 2.1. Observe that $f\in {\mathcal O}({\mathbb D}^n\times A(r,1))$ where $A(r_1,r_2)$ stands for an annulus, and for any fixed $z'$ in the set $A-Z_1-Z_2$ (which is of positive measure), $f(z',\cdot)$ extends to a holomorphic function on ${\mathbb D}$. By using Laurent series expansion of $f$ in $z_{n+1}$, we conclude that $f$ extends holomorphically to ${\mathbb D}^{n+1}$ (compare \cite{HarveyPolking75}, Theorem 2.1).

 Now choose $r<r''<r'<1$ and ball $B'$ with $B\subset\subset B'\subset \subset {\mathbb D}^n$. By Cauchy's integrals, we have
 \begin{eqnarray*}
 |f(z^0)|^2  & \le & C \int_{B'\times ({{\mathbb D}_{r'}\backslash {\mathbb D}_{r''}})}|f|^2\\
 & \le & C \sup_{B'\times ({{\mathbb D}_{r'}\backslash {\mathbb D}_{r''}})}e^\varphi \int_{B'\times ({{\mathbb D}_{r'}\backslash {\mathbb D}_{r''}})}|f|^2e^{-\varphi}\\
 & \le & C \sup_{B'\times ({{\mathbb D}_{r'}\backslash {\mathbb D}_{r''}})}e^\varphi
 \end{eqnarray*}
 where $C$ is a generic constant depending only on $B,B',r,r',r''$. Since $f(z^0)=e^{\varphi(z^0)/2}$, we obtain
 $$
 \varphi(z^0)\le \log C + \sup_{B'\times ({{\mathbb D}_{r'}\backslash {\mathbb D}_{r''}})}\varphi.
 $$
 Since $B$ and $z^0$ are arbitrarily chosen, we conclude the proof.

 \begin{conjecture}
Let $E$ be a closed pluripolar subset of ${\mathbb D}^{n+1}$ such that $E\subset {\mathbb D}^{n}\times {\mathbb D}_r$ for some $0<r<1$. Let $\varphi\in PSH({\mathbb D}^{n+1}\backslash E)$.  Suppose
    there is a nonpluripolar subset $A$ of ${\mathbb D}^{n}$ such that for each $z'\in A$, the restriction of $\varphi$ to $(\{z'\}\times {\mathbb D})\backslash E$ is locally bounded from above near $(\{z'\}\times {\mathbb D})\cap E$.  Then $\varphi$
 can be extended to a psh function on ${\mathbb D}^{n+1}$.
\end{conjecture}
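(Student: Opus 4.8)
The plan is to run the argument of \S4. As there, it is enough to prove that $\varphi$ is locally bounded from above near $E$: since $E$ is closed pluripolar, the classical removable singularity theorem for plurisubharmonic functions then shows that the $\limsup$-regularization of $\varphi$ across $E$ is psh on ${\mathbb D}^{n+1}$ and extends $\varphi$. So fix a ball $B\subset\subset{\mathbb D}^n$ and a point $z^0\in(B\times{\mathbb D}_r)\backslash E$. One would like to apply Theorem 1.3 on ${\mathbb D}^{n+1}\backslash E$, exactly in the manner of Demailly \cite{Demailly92}, to obtain a holomorphic function $f$ on ${\mathbb D}^{n+1}\backslash E$ with $f(z^0)=e^{\varphi(z^0)/2}$ and $\int_{{\mathbb D}^{n+1}\backslash E}|f|^2e^{-\varphi}\le C_n$. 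Granting this, one shows that $f$ extends holomorphically to ${\mathbb D}^{n+1}$, and then, by the Cauchy estimates at the end of \S4, concludes $\varphi(z^0)\le\log C+\sup_{B'\times({\mathbb D}_{r'}\backslash {\mathbb D}_{r''})}\varphi$ with $C$ independent of $z^0$, which is the required bound.

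Two steps must be strengthened, corresponding to the two places where Theorem 1.2 used more than is assumed here. \emph{First}, the application of Theorem 1.3 needs ${\mathbb D}^{n+1}\backslash E$ to be a bounded complete K\"ahler domain. In \S4 this is exactly where complete pluripolarity of $E$ enters: Demailly's lemma (\cite{DemaillyBook}, Ch.\,3, Lemma 2.2) provides $\rho\in PSH^{-}({\mathbb D}^{n+1})\cap C^\infty({\mathbb D}^{n+1}\backslash E)$ with $\rho^{-1}(-\infty)=E$, whence $-\partial\bar{\partial}\log(-\rho)-\sum_{j=1}^{n+1}\partial\bar{\partial}\log(1-|z_j|)$ is a complete K\"ahler metric on the complement. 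When $E$ is only pluripolar, Josefson's theorem merely yields $\rho\in PSH({\mathbb D}^{n+1})$, $\rho\not\equiv-\infty$, with $E\subseteq\rho^{-1}(-\infty)$; the set $\rho^{-1}(-\infty)$ can be strictly larger than $E$ and need not even be closed, so this construction produces no smooth complete K\"ahler metric on ${\mathbb D}^{n+1}\backslash E$. \emph{Secondly}, one must still show that the function $f$ produced above extends to ${\mathbb D}^{n+1}$. Since $E\subset{\mathbb D}^n\times{\mathbb D}_r$, $f$ is holomorphic on ${\mathbb D}^n\times A(r,1)$, hence has a Laurent expansion $f(z',z_{n+1})=\sum_{k\in{\mathbb Z}}c_k(z')z_{n+1}^k$ with $c_k\in{\mathcal O}({\mathbb D}^n)$, and $f$ extends to ${\mathbb D}^{n+1}$ if and only if $c_k\equiv0$ for every $k<0$; this holds as soon as the set of $z'\in{\mathbb D}^n$ for which $f(z',\cdot)$ extends holomorphically to ${\mathbb D}$ is nonpluripolar, because a holomorphic function on ${\mathbb D}^n$ vanishing on a nonpluripolar set vanishes identically. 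Now, by a standard fact the set $Z_2$ of $z'$ for which $\rho(z',\cdot)\equiv-\infty$ is pluripolar, and for $z'\notin Z_2$ the slice $(\{z'\}\times{\mathbb D})\cap E$, being contained in the $-\infty$ set of the subharmonic function $\rho(z',\cdot)$, is a closed polar subset of ${\mathbb D}$; and for $z'\in A$ the restriction $\varphi(z',\cdot)$ is bounded above near that polar set, so that $f(z',\cdot)\in L^2_{\rm loc}$ there and extends across it by Theorem 2.1 --- provided also $\int_{\mathbb D}|f(z',\cdot)|^2e^{-\varphi(z',\cdot)}<\infty$, which by Fubini holds only for $z'$ outside a set $Z_1$ of Lebesgue measure zero.

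I expect the first step to be the main obstacle. Equipping the complement of an arbitrary closed pluripolar set with a complete K\"ahler metric --- in essence, the question whether such a set is complete pluripolar with a smooth defining exhaustion --- is a delicate problem which is open in general. Alternatively one would need an analogue of Theorem 1.3 valid on $\Omega_0\backslash E$, for $\Omega_0$ a bounded pseudoconvex domain and $E$ closed pluripolar, built solely from the psh exhaustion of $\Omega_0$ and a psh function blowing up along $E$, without assuming the ambient K\"ahler metric complete. But completeness was precisely what allowed the cut-off $\kappa_\varepsilon=\chi(\varepsilon\,{\rm dist}_g(0,\cdot))$ in the proof of Proposition 3.2 to be compactly supported in $\Omega_j$, which is the mechanism forcing $\bar{\partial}^\ast_{\varphi_j}(\rho\bar{\partial}w_j)\to0$; dispensing with it seems to require a genuinely new ingredient in the $L^2$ theory. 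The second step is secondary but not automatic: $Z_1$ is only known to be a null set rather than a pluripolar one, so as it stands the argument does not close when $A$ is merely nonpluripolar. The natural remedy would be to arrange --- by inserting an extra weight in the construction of $f$, or by replacing the $L^2$ criterion of Theorem 2.1 by a subharmonic-majorant criterion for the slices $f(z',\cdot)$ --- that the set of $z'$ for which the slice fails to extend is pluripolar; I would try this first, though the bare estimate $\int|f|^2e^{-\varphi}\le C_n$ does not by itself give it.
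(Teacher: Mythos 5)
The statement you were asked to prove is not proved in the paper at all: it appears only as the Conjecture at the end of \S 4, immediately after the proof of Theorem 1.2, and the authors leave it open. So there is no paper argument to compare your proposal against, and your proposal itself — as you candidly say — is not a proof but a correct diagnosis of why the method of \S 4 does not reach the stronger statement. Your two obstacles are exactly the two places where the hypotheses of Theorem 1.2 (rather than of the Conjecture) are used. First, the complete K\"ahler metric on ${\mathbb D}^{n+1}\backslash E$ is manufactured from Demailly's lemma, which requires $E$ to be \emph{complete} pluripolar in order to produce $\rho\in PSH^{-}({\mathbb D}^{n+1})\cap C^\infty({\mathbb D}^{n+1}\backslash E)$ with $\rho=-\infty$ exactly on $E$; for a merely pluripolar closed $E$, Josefson-type potentials may blow up on a strictly larger set, and no complete K\"ahler structure on the complement is available, so Theorem 1.3 cannot be invoked there. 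Second, even granting the function $f$ with $\int|f|^2e^{-\varphi}\le C_n$, Fubini only excludes a set $Z_1$ of Lebesgue measure zero, and a nonpluripolar $A$ can perfectly well have Lebesgue measure zero, so $A\backslash(Z_1\cup Z_2)$ may be empty and the Laurent-series step, which needs a sufficiently large set of slices $f(z',\cdot)$ extending across the polar slice of $E$, does not close.

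Both are genuine gaps, not removable technicalities, and your concluding assessment is accurate: without a new ingredient — an Ohsawa--Takegoshi-type extension theorem on complements of closed pluripolar sets that does not rely on completeness of the ambient metric (the completeness is what makes the cut-off argument in Proposition 3.2 work), or a slicing/weighting device that turns the exceptional set of $z'$ into a pluripolar rather than merely null set — the statement remains exactly what the paper calls it, a conjecture. Your write-up is a sound analysis of the difficulty, but it should not be presented as a proof, and the paper offers none either.
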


 \section{Proof of Theorem 1.4}
The idea is due to Siu \cite{SiuLelong}. The first step is to show the following

 \begin{proposition}
 Let $E$ be a compact polar subset of\/ ${\mathbb D}_r$ for some $0<r<1$. Put
 \begin{eqnarray*}
\Omega & = & ({\mathbb D}_\delta^n\times {\mathbb D})\cup ({\mathbb D}^{n}\times ({\mathbb D}\backslash E)) \\
G & = & ({\mathbb D}_\varepsilon^n\times {\mathbb D})\cup ({\mathbb D}^{n}\times ({\mathbb D}-\overline{\mathbb D}_s))
\end{eqnarray*}
for $0<\varepsilon<\delta<1$ and $0<r<s<1$. Let $(L,h)$ be a semipositive holomorphic line bundle over $\Omega$. Let $h$ be given locally by $e^{-\varphi}$ such that $e^{-\varphi}\in L^1_{\rm loc}(G)$. Then $L$ can be extended to a holomorphic line bundle over ${\mathbb D}^{n+1}$.
\end{proposition}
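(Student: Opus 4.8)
The plan is to follow Siu's method (cf. \cite{SiuLelong}): reduce the extension of $L$ to an $L^2$-extension problem for holomorphic sections, and settle the latter by combining the Ohsawa--Takegoshi theorem (Theorem 1.3) with the removability results of Section 2, in the same spirit as the proof of Theorem 1.2 in Section 4. Write $S:={\mathbb D}^{n+1}\setminus\Omega=({\mathbb D}^n\setminus{\mathbb D}^n_\delta)\times E$. Since the compact polar set $E\subset{\mathbb D}$ is complete polar, $S$ is locally the $(-\infty)$-locus of $\psi(z_{n+1})$ for a suitable $\psi$ subharmonic on ${\mathbb D}$; hence $S$ is a closed pluripolar, in particular closed polar and Lebesgue-null, subset of ${\mathbb D}^{n+1}$, across which Theorems 2.1 and 2.3 apply. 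I would also single out $X:={\mathbb D}^n\times({\mathbb D}\setminus E)$, a bounded Stein domain contained in $\Omega$, carrying a bounded complete K\"ahler metric (built from $\psi$ and $-\sum_j\log(1-|z_j|)$ exactly as in Section 4) and containing the annular region ${\mathbb D}^n\times A(r,1)$; since $H^1(X,\mathcal O)=0$ and $H^2(X,\Z)=0$, $L|_X$ is holomorphically trivial, so one may fix a frame $e$ of $L$ over $X$ with (psh) weight $\varphi$, i.e. $|\xi e|^2_h=|\xi|^2e^{-\varphi}$.

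For each point $z^0\in X$ lying close to $S$, I would apply Theorem 1.3 to $(X,\varphi)$ (as in Demailly \cite{Demailly92}) to produce a holomorphic section $f=\xi\,e$ of $L$ over $X$ with $\xi(z^0)=e^{\varphi(z^0)/2}$, so $|f(z^0)|_h=1$, and $\int_X|\xi|^2e^{-\varphi}\le C$; here the hypothesis $e^{-\varphi}\in L^1_{\rm loc}(G)$ is what makes $\varphi$ a genuine weight on the region to be used below, so that the relevant integrals are meaningful and finite. By Fubini there is a null set $Z_1\subset{\mathbb D}^n$ off which $\xi(z',\cdot)$ is square integrable against $e^{-\varphi(z',\cdot)}$, and a null set $Z_2$ off which $\{z'\}\times E$ is polar in ${\mathbb D}$; for $z'\notin Z_1\cup Z_2$, Theorem 2.1 lets $\xi(z',\cdot)$ extend holomorphically across $\{z'\}\times E$. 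Since $\xi$ is already holomorphic on ${\mathbb D}^n\times A(r,1)$, a Laurent expansion in $z_{n+1}$ together with the identity theorem shows that $\xi$ extends to a holomorphic function $\hat\xi$ on ${\mathbb D}^{n+1}$. Finally, a Cauchy estimate in $z_{n+1}$ followed by sub-mean-value averaging in $z'$ gives, for $r<r''<r'<1$ and a small ball $B'_0$ about the $z'$-coordinate of $z^0$, $e^{\varphi(z^0)}=|\hat\xi(z^0)|^2\le C\int_{B'_0\times\{r''<|z_{n+1}|<r'\}}|\hat\xi|^2\le C\Big(\sup_{\overline{B'_0}\times\{r''\le|z_{n+1}|\le r'\}}e^\varphi\Big)\int_X|\xi|^2e^{-\varphi}$, and the last supremum is finite since $\varphi$ is an ordinary psh weight on that compact subset of $X$. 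Hence $\varphi(z^0)$ is bounded above uniformly for $z^0$ near $S$; that is, $\varphi$ is locally bounded above near $S$ --- the line-bundle analogue of Theorem 1.2.

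It then remains to assemble the extension. A psh function locally bounded above near the polar set $S$ extends to a psh function $\tilde\varphi$ on a neighborhood of $X\cup S$, and one checks --- using $\hat\xi$ and its relatives together with Theorem 2.1 applied to the (bounded) holomorphic transition functions between $e$ and the frames of $L$ already available over the Stein pieces of $\Omega$ adjacent to $S$ --- that the trivialization $e$ extends across $S$; the cocycle relations persist by continuity, yielding a holomorphic line bundle on ${\mathbb D}^{n+1}$ restricting to $L$ on $\Omega$. I expect the main obstacle to be precisely this last stage combined with the extraction of the weight bound above: $\Omega$ and $G$ are Hartogs-type rather than Stein, so $L$ has no global frame on them and the sections produced by Theorem 1.3 necessarily vanish somewhere; the most delicate point is the ``edge'' $\partial{\mathbb D}^n_\delta\times E$, where $\Omega$ is a genuine Hartogs figure and the extension of the transition data must be read off from the fact, established in the second paragraph, that the relevant holomorphic functions extend over all of ${\mathbb D}^{n+1}$.
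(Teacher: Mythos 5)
The central step of your argument does not go through. After producing $\xi$ on $X={\mathbb D}^n\times({\mathbb D}\setminus E)$ with $\int_X|\xi|^2e^{-\varphi}\le C$, you invoke Theorem 2.1 on slices, but Theorem 2.1 needs $\xi(z',\cdot)\in L^2_{\rm loc}$ with respect to Lebesgue measure near $\{z'\}\times E$, and the weighted bound $\int_{\mathbb D}|\xi(z',\cdot)|^2e^{-\varphi(z',\cdot)}<\infty$ yields this only if $\varphi(z',\cdot)$ is locally bounded above near the slice of $E$ --- which is precisely the unproven assertion, and in fact it is frame-dependent and false in general. For $z'\in{\mathbb D}^n_\delta$ the weight of your frame $e$ is $\varphi=\varphi_2-2\log|t|$, where $\varphi_2$ is the psh weight of a frame $\zeta$ over $U_2={\mathbb D}^n_\delta\times{\mathbb D}$ and $t\in\mathcal O^*({\mathbb D}^n_\delta\times({\mathbb D}\setminus E))$ is the transition function; nothing prevents $|t|\to 0$ at $E$. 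Concretely, with $0\in E$ replace $e$ by $e^{1/z_{n+1}}e$: the hypotheses on $(L,h)$ are untouched (they do not see the frame), but the weight changes by $-2\,\mathrm{Re}(1/z_{n+1})$ and is unbounded above near $S$, and a function $\xi$ with finite weighted norm can have an essential singularity along $z_{n+1}=0$ on every slice. So your intermediate goal ``$\varphi$ is locally bounded above near $S$'' cannot be proved for an arbitrary frame, and any normalization of the frame that would make it true is essentially equivalent to the triviality of $L$ that you are trying to establish; accordingly, the final paragraph (``one checks that the trivialization $e$ extends across $S$''), which you yourself flag as the delicate point, is exactly the content left unproved.

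This is also where your route diverges from the paper's, which never extends frame coefficients across $S$ nor bounds the weight of a frame over $X$. The paper works with sections of $L$ over $\Omega$: for $z^0\in G$ it uses the H\"ormander--Bombieri--Skoda theorem on the pseudoconvex pieces $U_1$, $U_2$ (this is where $e^{-\varphi}\in L^1_{\rm loc}(G)$ enters, to get a section nonvanishing at $z^0$ with finite $\|\cdot\|_h$-norm), plus H\"ormander's $L^2$ estimates to extend from the slice $\{z'=z^0{}'\}$ when $z^0\notin U_1$; then it rewrites the section in the frame $\zeta$ over $U_2$, where the weight $\varphi_2$ is psh across $E$ and hence automatically locally bounded above, so Corollary 2.4 extends the coefficient across ${\mathbb D}^n_\delta\times E$ and the section extends to all of $\Omega$. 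Having shown that $\Gamma(\Omega,L)$ generates the fibres over $G$, it concludes by the theory of coherent subsheaf extension (Siu), rather than by extending a weight or a trivialization across $S$. Your Fubini--Laurent mechanism is the right tool for Theorem 1.2, where the positive-measure set $A$ of slices with bounded weight is a hypothesis; here no such hypothesis is available, and that is the gap.
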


\begin{proof} Take a covering $\{U_1,U_2\}$ of $\Omega$ where
$$
U_1  =  {\mathbb D}^{n}\times ({\mathbb D}\backslash E),\ \ \
U_2 =  {\mathbb D}_\delta^n\times {\mathbb D}.
$$
 Clearly, $L$ is trivial over each of them. Let $\Gamma(\Omega,L)$ denote the space of holomorphic sections of $L$ over $\Omega$. We are going to show that $\Gamma(\Omega,L)$ generates all fibres $L_z$, $z\in G$. Granted this, the conclusion follows from the theory of coherent subsheaf extension (compare \cite{SiuLelong}, p. 146).

Given $z^0\in U_1\cap G$. Since $U_1$ is pseudoconvex and $h=e^{-\varphi}\in L^1_{\rm loc}(U_1\cap G)$ with $\varphi\in PSH(U_1)$,  there exists $s\in \Gamma(U_1,L)$ such that $s(z^0)\neq 0$ and $\int_{U_1}|s|_h^2 <\infty$, in view of the H\"ormander-Bombieri-Skoda theorem. Since $L$ is trivial over $U_2$, there exist a global frame $\zeta$ on $U_2$ and a function $\varphi\in PSH(U_2)$ such that $h=e^{-\varphi}$. Write $s=f\otimes \zeta$ with $f\in {\mathcal O}(U_2\backslash E)$. As
$$
\int_{U_{2}\backslash E}|f|^2e^{-\varphi}\le \int_{U_1}|s|_h^2 <\infty
$$
and $\varphi$ is locally bounded above near $E$, it follows that $f$ can be extended to a holomorphic function on $U_2$, in view of Corollary 2.4. That is, $s$ can be extended to a holomorphic section of $L$ over $\Omega$ which generates the fibre $L_{z^0}$.

Now let $z^0\in U_2\cap G\backslash U_1$. There is $f\in {\mathcal O}(U_2)$ such that $f(z^0)\neq 0$ and $\int_{U_2}|f|^2e^{-\varphi}<\infty$. Put $s=f\otimes \zeta$. A standard application of H\"ormander's $L^2$ estimates for the $\bar{\partial}-$operator yields a section $s_1\in \Gamma(U_1,L)$ such that $s_1=s$ on
 $$
 U_1\cap \{z_1=z^0_1,\cdots,z_n=z^0_n\},
 $$
 and
 $$
 \int_{U_1}|s_1|^2_h \le {\rm const}_\delta \int_{U_2} |s|^2_h <\infty.
 $$
 Similar as above, $s_1$ may be extended to a holomorphic section $s_2$ of $L$ over $\Omega$. Since $s_2(z^0)=s(z^0)\neq 0$, it follows that $s_2\in \Gamma(\Omega,L)$ generates the fibre $L_{z^0}$.
\end{proof}

\begin{proof}[Proof of Theorem 1.4]
Replacing $\Omega$ by
$$
({\mathbb D}^n_\delta\times {\mathbb D}_r)\cup \left({\mathbb D}^n_r\times ({\mathbb D}_r\backslash E)\right)
$$
for $0<r<1$, and multiplying $u$ by a sufficiently small positive number, we can assume without loss of generality that the Lelong number $n(u,z)$ of $u$ at $z$ is less than $2$ for
$$
z\in G:=({\mathbb D}_\varepsilon^n\times {\mathbb D})\cup ({\mathbb D}^{n}\times ({\mathbb D}-\overline{\mathbb D}_s))
$$
where $0<\varepsilon<\delta$ and $r<s<1$. Choose an open covering $\{U_j\}$ of $\Omega$ consisting of polydiscs, and $\varphi_j\in PSH(U_j)$ such that $u=\frac{i}{\pi}\partial\bar{\partial}\varphi_j$ on $U_j$. By a theorem of Skoda \cite{Skoda72}, we have $e^{-\varphi_j}\in L^1_{\rm loc}(G\cap U_j)$. Put $U_{jk}=U_j\cap U_k$. We may choose $f_{jk}\in {\mathcal O}(U_{jk})$ skew-symmetric in $j,k$ such that $2{\rm Re\,}f_{jk}=\varphi_j-\varphi_k$.
 Since $f_{jk}+f_{kl}+f_{lj}$ is a purely imaginary constant on $U_{jkl}:=U_j\cap U_k\cap U_l$ and $H^2(\Omega,{\mathbb R})\cong{\rm Hom}(H_2^{{\rm sing}}(\Omega,\mathbb Z),\mathbb R)=0$ (note that every $2-$cycle in $\Omega$ is homologous to a $2-$cycle in the {\it contractible}\/ domain ${\mathbb D}^n_\delta\times {\mathbb D}$ via the homotopy given by $(z',z_{n+1})\mapsto ((\delta+t(1-\delta))z',z_{n+1})$, $t\in [0,1]$, thus $H_2^{{\rm sing}}(\Omega,\mathbb Z)=0$), we conclude that there exists $c_{jk}\in {\mathbb R}$ skew-symmetric in $j,k$ such that
 $$
 f_{jk}+f_{kl}+f_{lj}=i(c_{jk}+c_{kl}+c_{lj})
 $$
 on $U_{jkl}$. Put $g_{jk}=\exp(f_{jk}-ic_{jk})$. Then there is a holomorphic line bundle $L$ over $\Omega$ with transition functions $\{g_{jk}\}$ for the covering $\{U_j\}$.

   Thanks to Proposition 5.1, we conclude that $L$ is globally trivial over $\Omega$. Hence there are nowhere zero holomorphic functions $g_j$ on $U_j$ such that $g_{jk}=g_kg_j^{-1}$ on $U_{jk}$. Let $\varphi$ be the psh function on $\Omega$ which equals $\varphi_j+2\log |g_j|$ on $U_j$. By Theorem 1.2, $\varphi$ can be extended to a psh function on ${\mathbb D}^{n+1}$, so that the closed positive $(1,1)-$current $\frac{i}\pi \partial\bar{\partial}\varphi$ on ${\mathbb D}^{n+1}$ extends $u$.
\end{proof}

\section{Domains on which the conclusion of Theorem 1.3 fails}

A generalized annulus is a domain defined as ${\mathcal A}:=D\backslash \overline{D_0}$, where $D_0\subset \subset D$ are two smooth bounded domains in ${\mathbb C}^n$. We are going to show

\begin{proposition}
If $n\ge 2$, then the conclusion of Theorem 1.3 fails for ${\mathcal A}$.
\end{proposition}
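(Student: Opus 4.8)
Write $\mathcal A=D\setminus\overline{D_0}$ with $D_0\subset\subset D$ smooth bounded domains. The first step is a reduction via the Hartogs phenomenon: since $n\ge 2$ and $\overline{D_0}$ is a compact subset of $D$ with connected complement, restriction is an isomorphism $\mathcal O(D)\xrightarrow{\ \sim\ }\mathcal O(\mathcal A)$; in particular $\mathcal A$ is not pseudoconvex. Consequently, for every $\varphi\in PSH(\mathcal A)$ the weighted space $A^2(\mathcal A,e^{-\varphi})=\{f\in\mathcal O(\mathcal A):\int_{\mathcal A}|f|^2e^{-\varphi}<\infty\}$ is precisely $\{f\in\mathcal O(D):\int_{\mathcal A}|f|^2e^{-\varphi}<\infty\}$, and (taking the extremal normalization $|c|^2=e^{\varphi(a)}$) the conclusion of Theorem 1.3 for $\Omega=\mathcal A$ is equivalent to a uniform bound
\[
m(\varphi,a):=\inf\Big\{\textstyle\int_{\mathcal A}|f|^2e^{-\varphi}\ :\ f\in\mathcal O(D),\ f(a)=e^{\varphi(a)/2}\Big\}=\frac{e^{\varphi(a)}}{K_{\mathcal A,\varphi}(a)}\ \le\ C,
\]
with $C=C(n,\operatorname{diam}\mathcal A)$ independent of $\varphi\in PSH(\mathcal A)$ and $a\in\mathcal A$, where $K_{\mathcal A,\varphi}$ denotes the weighted Bergman kernel. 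So it suffices to produce weights $\varphi_k\in PSH(\mathcal A)$ and a point $a\in\mathcal A$ with $m(\varphi_k,a)\to\infty$ (keeping $e^{\varphi_k(a)}$ bounded away from $0$, so the normalization stays admissible).

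The point to exploit is that, in contrast with the pseudoconvex case — where Theorem 1.3 applied to $D$ itself controls the analogous extremal quantity — a plurisubharmonic weight on $\mathcal A$ need not be the trace of a plurisubharmonic weight on $D$, because $\mathcal A$ omits the ``fat'' set $\overline{D_0}$; the obstruction is located along the inner boundary $\partial D_0$, which for $n\ge2$ is pseudoconcave from the side of $\mathcal A$. Thus I would choose $\varphi_k$ so that $e^{-\varphi_k}$ is concentrated (made very large) on a piece $S_k$ of $\mathcal A$ abutting $\partial D_0$, chosen so that the $\mathcal O(D)$-convex hull of $S_k$ contains the point $a$: then any $f\in\mathcal O(D)$ with $\int_{\mathcal A}|f|^2e^{-\varphi_k}\le m$ is forced, by the maximum principle on $D$, to be correspondingly small on $S_k$, hence at $a$, unless $m$ is large — while the plurisubharmonicity of $\varphi_k$ on $\mathcal A$ is arranged to keep $\varphi_k(a)$ under control. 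Letting the singularity of $\varphi_k$ near $\partial D_0$ approach, as $k\to\infty$, the threshold past which plurisubharmonicity on $\mathcal A$ would be lost (this is precisely where the pseudoconcavity of $\partial D_0$ and the Hartogs phenomenon provide the extra room unavailable for weights on $D$), one drives $m(\varphi_k,a)$ to $+\infty$.

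The main obstacle is the construction of the $\varphi_k$ together with the accompanying quantitative estimate. The difficulty is that plurisubharmonicity on $\mathcal A$ already imposes a fair amount of sub‑mean‑value control of $\varphi_k$ at interior points (so $\varphi_k(a)$ cannot be made arbitrarily large relative to the values of $\varphi_k$ on the surrounding spheres, by testing against complex disks through $a$), and one must squeeze, from the pseudoconcavity of $\partial D_0$, enough room to simultaneously: (a) make $e^{-\varphi_k}$ huge on a set controlling $a$ holomorphically in $D$, (b) keep $\varphi_k$ plurisubharmonic on $\mathcal A$, and (c) keep $e^{\varphi_k(a)}$ bounded below. Carrying out this balance and extracting from it the divergence $m(\varphi_k,a)\to\infty$ is the crux; once it is in place, the remaining ingredients (Hartogs, the Bergman‑kernel reformulation, and the comparison with the pseudoconvex domain $D$, for which Theorem 1.3 does hold) are routine, and one concludes that the conclusion of Theorem 1.3 fails for every generalized annulus of dimension $\ge 2$, so that the complete K\"ahler hypothesis cannot be dropped in favor of mere boundedness.
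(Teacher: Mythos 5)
There is a genuine gap: your argument stops exactly at the step that constitutes the proof. The reduction to a uniform bound on the extremal quantity $m(\varphi,a)=e^{\varphi(a)}/K_{\mathcal A,\varphi}(a)$ and the use of Hartogs to identify $\mathcal O(\mathcal A)$ with $\mathcal O(D)$ are fine, but the entire content of the proposition is the existence of weights $\varphi_k\in PSH(\mathcal A)$ (with $\varphi_k(a)$ under control) for which $m(\varphi_k,a)\to\infty$, and you explicitly leave this construction --- which you yourself call ``the crux'' --- undone. Moreover, the mechanism you sketch is doubtful even in the model case: for the spherical shell $\mathcal A=\{1/2<|z|<1\}\subset\C^2$ (which the proposition must cover), the $\mathcal O(D)$-convex hull of any set $S_k\subset\mathcal A$ abutting $\partial D_0$ adds only points of $\overline{D_0}$, not new points of $\mathcal A$; so the maximum-principle control you invoke does not reach a reference point $a\in\mathcal A\setminus S_k$, and the heuristic tension between ``$e^{-\varphi_k}$ huge near $\partial D_0$'' and ``$\varphi_k(a)$ bounded'' does not by itself produce the divergence. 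A direct quantitative construction of such weights is genuinely hard, which is precisely why it cannot be waved through as a balance to be ``carried out.''

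The paper proceeds in the opposite, indirect direction and thereby avoids any such construction. Assuming the conclusion of Theorem 1.3 holds on $\mathcal A$, it runs Demailly's approximation scheme with the weighted Bergman kernels $K_{m\varphi}$ of $H^2(\mathcal A,m\varphi)$ for an arbitrary smooth $\varphi\in PSH(\mathcal A)$, obtaining the two-sided estimate $\varphi-\tfrac{C_1}{m}\le\tfrac1m\log K_{m\varphi}\le\sup_{|\zeta-z|<r}\varphi+\tfrac1m\log\tfrac{C_2}{r^n}$; since $n\ge2$, Hartogs' theorem extends each element of an orthonormal basis of $H^2(\mathcal A,m\varphi)$ to $D$, so $\tfrac1m\log K_{m\varphi}$ extends to psh functions on $D$, locally uniformly bounded above by the maximum principle, and the decreasing limit of $\bigl(\sup_{m\ge j}\tfrac1m\log K_{m\varphi}\bigr)^\ast$ gives a psh extension of $\varphi$ to $D$. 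This contradicts the Bedford--Taylor theorem that $\mathcal A$ carries a smooth psh function admitting no psh extension to any larger domain. In other words, the deep input you would have to manufacture by hand (a psh object on $\mathcal A$ incompatible with the inner boundary) is exactly what the citation of Bedford--Taylor supplies; without either that citation or a completed construction of your $\varphi_k$, your proposal does not prove the proposition.
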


\begin{proof}
Suppose on the contrary Theorem 1.3 holds on ${\mathcal A}$. We are going to show that every smooth psh function on ${\mathcal A}$ can be extended to a psh function on $D$. This would contradict with a theorem of Bedford-Taylor \cite{BedfordTaylor88} that there always exists a smooth psh function on ${\mathcal A}$ which does not admit a psh extension to any larger domain.

Now fix a smooth psh function $\varphi$ on ${\mathcal A}$. For each positive integer $m$, we define $H^2({\mathcal A},m\varphi)$ to be the Hilbert space of holomorphic functions $f$ on ${\mathcal A}$ such that
$$
\int_{\mathcal A} |f|^2 e^{-m\varphi}<\infty.
$$
Let $K_{m\varphi}$ denote the related Bergman kernel. Since Theorem 1.3 holds on ${\mathcal A}$, we may prove similarly as Demailly \cite{Demailly92} the following
\begin{equation}
\varphi(z)-\frac{C_1}m \le \frac1m \log K_{m\varphi}(z)\le \sup_{|\zeta-z|<r} \varphi(\zeta)+\frac1m \log \frac{C_2}{r^n}
\end{equation}
for every $z\in {\mathcal A}$ and $r<d(z,\partial {\mathcal A})$, where $C_1,C_2$ are constants independent of $m$. Fix two arbitrary domains $D_1,D_2$ such that $D_0\subset\subset D_1\subset\subset D_2\subset D$. Since $\varphi$ is smooth on ${\mathcal A}$, so we may choose $r=1/m$ in (6.1) such that
\begin{equation}
\left|\frac1m \log K_{m\varphi}(z)-\varphi(z)\right|\le \frac{C}m\log m
\end{equation}
holds uniformly on $D_2\backslash D_1$ for sufficiently large $m$. Let $\{f_{m,k}\}_{k\ge 1}$ be an orthonormal basis of $H^2({\mathcal A},m\varphi)$. By Hartogs' theorem, every $f_{m,k}$ may be extended to a holomorphic function on $D$.  It follows that
$$
\frac1m \log K_{m\varphi}=\frac1m \log \sum_k |f_{m,k}|^2,\ \ \ m\ge 1,
$$
can be extended to psh functions on $D$, which are locally uniformly bounded from above in $D$ in view of (6.2) and the maximum principle. Thus
$$
\varphi_j:=\left(\sup_{m\ge j}\left\{\frac1m \log K_{m\varphi}\right\}\right)^\ast,\ \ \ j\ge 1,
$$
are psh functions on $D$. Here $u^\ast$ stands for the upper semicontinuous regularization of $u$. Since $\{\varphi_j\}_{j\ge 1}$ is a decreasing sequence of psh function on $D$, so the limit function $\tilde{\varphi}$ is also psh on $D$. On the other hand, we have $\varphi_j\rightarrow \varphi$ pointwise on ${\mathcal A}$ in view of (6.2), so that $\tilde{\varphi}=\varphi$ on ${\mathcal A}$.
\end{proof}


\begin{thebibliography}{99}

    \bibitem{Bando} S. Bando, {\it Removable singularities for holomorphic vector bundles}, T$\hat{\rm o}$hoku Math. J. {\bf 43} (1991), 61--67.

    \bibitem{BedfordTaylor88} E.Bedford and B.A. Taylor, {\it Smooth plurisubharmonic functions without subextension}, Math. Z. {\bf 198} (1988), 331--337.

\bibitem{Carleson67} L. Carleson, Selected Problems on Exceptional Sets, Van Nostrand Mathematical Studies, No. 13, Van Nostrand, Princeton, N.J., 1967.

\bibitem{Conway95} J. B. Conway, Functions of One Complex Variable II, GTM {\bf 159}, Spinger-Verlag, 1995.

\bibitem{Demailly82}  J.-P. Demailly, {\em Estimations }$L^2$ {\em pour l'op\'erateur
d'un fibr\'e vectoriel holomorphe semi-positiv au dessus d'une
vari\'et\'e k\"ahl\'erienne compl\`ete}$,$ Ann. Sci. Ec. Norm.
Sup. {\bf 15} (1982), 457--511.

\bibitem{Demailly92} J.-P. Demailly, {\it Regularization of closed positive currents and Intersection Theory}, J. Alg. Geom. {\bf 1} (1992), 361--409.

\bibitem{DemaillyBook} J.-P. Demailly, Complex Analytic and Differential Geometry, available at Demailly's home page.

\bibitem{DemaillyBook10} J. P. Demailly, Analytic Methods in Algebraic Geometry, International Press, 2010.

\bibitem{DiederichPflug} K. Diederich and P. Pflug, {\it $\ddot{U}$ber Gebiete mit vollst\"andiger K\"ahlermetrik}, Math. Ann. {\bf 257} (1981), 191--198.

\bibitem{FollandKohn} G. B. Folland and J. J. Kohn, The Newmann Problem for the Cauchy-Riemann Complex, Princeton University Press, Princeton, New Jersey, 1972.

\bibitem{GilbergTrudinger} D. Gilberg and N.S. Trudinger, Elliptic Partial Differential Equations of Second Order, Springer, 2001.

\bibitem{HarveyPolking75} R. Harvey and J. Polking, {\it Extending analytic objects}, Comm. Pure Appl. Math. {\bf XXVIII} (1975), 701--727.

\bibitem{Hormander65}
L. H\"{o}rmander, {\it $L^2-$estimates and existence theorems for the $\bar{\partial}-$equation}, Acta Math. {\bf 113} (1965), 89--152.

\bibitem{Landkof} N. S. Landkof, Foundations of Moden Potential Theory, Springer-Verlag, 1972.

\bibitem{OhsawaTakegoshi87}
T. Ohsawa and K. Takegoshi, {\it On the extension of $L^2$ holomorphic functions}, Math. Z. {\bf 195} (1987), 197--204.

\bibitem{Ohsawa95} T. Ohsawa, {\it On the extension of $L^2$ holomorphic functions, III: negligible weights}, Math. Z. {\bf 219} (1995), 215--225.

\bibitem{Siciak82} J. Siciak, {\it On removable singularities of $L^2$ holomorphic functions of several variables}, Prace Matematyzno Fizyczne, 1982.

\bibitem{SiuLelong} Y.-T. Siu, {\it Analyticity of sets associated to Lelong numbers and the extension of closed positive currents}, Invent. Math. {\bf 27} (1974), 53--156.

\bibitem{SiuTechExten} Y.-T. Siu, Techniques of Extension of Analytic Objects, Mercel Dekker, Inc. New York, 1974.

    \bibitem{Skoda72} H. Skoda, {\it Sous-ensembles analytiques d'ordre fini ou infini dans ${\mathbb C}^n$}, Bull. Soc. Math. France {\bf 100} (1972), 353--408.

 \end{thebibliography}
\end{document}